\pgfplotsset{compat=newest}
\definecolor{navyblue}{rgb}{0.0, 0.0, 0.5}
\date{}
\newtheorem{theorem}{Theorem}
\newtheorem{lemma}[theorem]{Lemma}
\newtheorem{cor}[theorem]{Corollary}
\newtheorem{prop}[theorem]{Proposition}
\newtheorem{remark}[theorem]{Remark}
\theoremstyle{definition} 
\newcommand{\<}{\langle{}}
\renewcommand{\>}{\rangle}
\newcommand{\diam}{\mathrm{diam}}
\newcommand{\wat}{\widehat}
\newcommand{\G}{\Gamma}
\newcommand{\bt}{{\boldsymbol{t}}}
\newcommand{\bn}{{\boldsymbol{n}}}
\newcommand{\nn}{{\boldsymbol{n}}}
\newcommand{\ip}[2]{\llangle#1\hspace*{.5mm},#2\rrangle}
\newcommand{\dual}[2]{\<#1\hspace*{.5mm},#2\>}
\newcommand{\vdual}[2]{(#1\hspace*{.5mm},#2)}
\newcommand{\dualGC}[2]{\<#1\hspace*{.5mm},#2\>}
\newcommand{\dualGCa}[2]{\<#1\hspace*{.5mm},#2\>^*}
\newcommand{\grad}{\nabla}
\DeclareMathOperator{\Grad}{\boldsymbol{\nabla}}
\let\div\undefined
\DeclareMathOperator{\div}{{\rm div}}
\DeclareMathOperator{\Div}{{\rm\bf div}}
\DeclareMathOperator{\dDiv}{\div\Div}
\DeclareMathOperator{\curl}{curl}
\DeclareMathOperator{\Curl}{\mathbf{curl}}
\DeclareMathOperator{\scurl}{curl^s}
\DeclareMathOperator{\rot}{rot}
\DeclareMathOperator{\aGrad}{\boldsymbol{\nabla^{\textmd{*}}\!}} 
\DeclareMathOperator{\aCurl}{\mathbf{curl^*\!}} 
\DeclareMathOperator{\GCurl}{\Grad\Curl}
\DeclareMathOperator{\GCurla}{\aCurl\aGrad} 
\newcommand{\HGradcurl}[1]{{H(\Grad\curl,#1)}}
\newcommand{\HrotDiv}[1]{{\HH(\rot\Div,#1)}}
\newcommand{\HGradCurl}[1]{{\bH(\Grad\Curl,#1)}}
\newcommand{\HGradCurla}[1]{{\HH(\aCurl\aGrad,#1)}}
\newcommand{\HGradCurlz}[1]{{\bH_0(\Grad\Curl,#1)}}
\newcommand{\HGC}{\wat\bH}
\newcommand{\HGCz}{\wat\bH_0}
\newcommand{\HGCa}{\wat\bH^*}
\newcommand{\HH}{\mathbb{H}}
\newcommand{\bH}{\boldsymbol{H}}
\newcommand{\HDiv}[1]{{\HH(\Div,#1)}}
\newcommand{\HdDiv}[1]{{\HH(\dDiv,#1)}}
\def\GG{\boldsymbol{G}}
\def\MM{\boldsymbol{M}}
\def\SS{\boldsymbol{S}}
\def\II{\boldsymbol{I}}
\def\QQ{\boldsymbol{Q}}
\def\PP{\boldsymbol{P}}
\def\bQ{\boldsymbol{Q}}
\def\bR{\boldsymbol{R}}
\def\bq{\boldsymbol{q}}
\newcommand{\cD}{\mathcal{D}}
\newcommand{\cbD}{\boldsymbol{\mathcal{D}}}
\newcommand{\cDD}{\mathbb{D}}
\newcommand{\bL}{\ensuremath{\boldsymbol{L}}}
\newcommand{\LL}{\ensuremath{\mathbb{L}}}
\def\tq{\wat{\boldsymbol{q}}}
\def\tbv{\wat{\boldsymbol{v}}}
\def\tbw{\wat{\boldsymbol{w}}}
\def\tQ{\wat{\boldsymbol{q}}}
\def\tP{\wat{\boldsymbol{p}}}
\def\tu{\wat{\boldsymbol{u}}}
\newcommand{\bP}{\boldsymbol{P}}
\newcommand{\ff}{\boldsymbol{f}}
\newcommand{\bg}{\boldsymbol{g}}
\newcommand{\bp}{\boldsymbol{p}}
\newcommand{\dbv}{\boldsymbol{\delta\!v}}
\newcommand{\dQQ}{{\boldsymbol{\delta}\!\QQ}}
\newcommand{\bv}{\boldsymbol{v}}
\newcommand{\bu}{\boldsymbol{u}}
\newcommand{\uu}{\mathfrak{u}}
\newcommand{\dvv}{\delta\!\vv}
\newcommand{\duu}{\delta\!\uu}
\newcommand{\vv}{\mathfrak{v}}
\newcommand{\ww}{\mathfrak{w}}
\newcommand{\UU}{\ensuremath{\mathfrak{U}}}
\newcommand{\VV}{\ensuremath{\mathfrak{V}}}
\newcommand{\bphi}{\ensuremath{\boldsymbol{\phi}}}
\newcommand{\bx}{\ensuremath{\boldsymbol{x}}}
\newcommand{\deltabv}{{\boldsymbol{\delta}\!\bv}}
\newcommand{\deltav}{\delta\!v}
\newcommand{\deltaQQ}{\boldsymbol{\delta}\!\mathbf{Q}}
\newcommand{\traceDD}[1]{\mathrm{tr}_{#1}^{\mathrm{dDiv}}} 
\newcommand{\traceGG}[1]{\mathrm{tr}_{#1}^{\mathrm{Ggrad}}} 
\newcommand{\tr}[1]{\mathrm{tr}_{#1}^{\mathrm{GCurl}}}
\newcommand{\tra}[1]{\mathrm{tr}_{#1}^{\mathrm{GCurl}^*}}
\newcommand{\jump}[1]{[#1]}
\newcommand{\ttt}{{\mathfrak{T}}}
\newcommand{\di}{d}
\newcommand{\R}{\ensuremath{\mathbb{R}}}
\newcommand{\cB}{\ensuremath{\mathcal{B}}}
\newcommand{\cT}{\ensuremath{\mathcal{T}}}
\newcommand{\cS}{\ensuremath{\mathcal{S}}}
\newcommand{\OO}{\ensuremath{\mathcal{O}}}
\newcommand{\cE}{\ensuremath{\mathcal{E}}}
\newcommand{\cN}{\ensuremath{\mathcal{N}}}
\newcommand{\eeta}{{\boldsymbol\eta}}
\title{A DPG method for the quad-curl problem
\thanks{Supported by ANID-Chile through FONDECYT projects 1190009, 1210391}\\
{\small
Dedicated to Professor Leszek F. Demkowicz on the occasion of his 70$^\mathrm{th}$ birthday}
\author{
Thomas~F\"uhrer$^\dagger$
\and
Pablo Herrera$^\dagger$
\and
Norbert Heuer\thanks{
Facultad de Matem\'aticas, Pontificia Universidad Cat\'olica de Chile,
Avenida Vicu\~na Mackenna 4860, Santiago, Chile,
email: {\tt \{tofuhrer,pcherrera,nheuer\}@mat.uc.cl}}}}
\begin{document}
\maketitle
\begin{abstract}
We derive an ultraweak variational formulation of the quad-curl problem in two and three
dimensions. We present a discontinuous Petrov--Galerkin (DPG) method for its approximation
and prove its quasi-optimal convergence. We illustrate how this method can be applied to the
Stokes problem in two dimensions, after an application of the curl operator to eliminate the
pressure variable. In this way, DPG techniques known from Kirchhoff--Love plates can be used.
We present an a priori error estimate that improves a previous approximation result
for effective shear forces by using a less restrictive regularity assumption.
Numerical experiments illustrate our findings.

\bigskip
\noindent
{\em AMS Subject Classification}:
35J35, 
65N30, 
74K20, 
35J67  
\end{abstract}

\section{Introduction}

In recent years, there has been an increased interest in the numerical analysis of quad-curl
problems, see
\cite{
BrennerSS_17_HDM,
CakoniCMS_10_IES,
CaoCH_EAD,
ChenQX_21_AIP,
HongHSX_12_DGM,
MonkS_12_FEM,
ZhangZ_CCC,
Zhang_18_MSQ,
ZhengHX_11_NFE}.
Such problems appear, for instance, in Maxwell transmission eigenvalue problems
\cite{Haddar_04_ITP} and magneto-hydrodynamics \cite{Biskamp_00_MRP}.
By relation $\Curl^4\bu=(\grad\times)^4\bu=-\Curl\Div\Grad\Curl\bu$
for vector fields $\bu$ in three dimensions, a canonical variational formulation
leads to considering spaces $\bH(\Curl^2,\Omega)$ and $\bH(\Grad\Curl,\Omega)$
of $L_2(\Omega)$ vector functions $\bu$ with $\Curl^2\bu\in L_2(\Omega)^3$ and
$\Grad\Curl\bu\in L_2(\Omega)^{3\times 3}$, respectively. Inherent regularities
are different and depend on the imposed boundary conditions. The construction and analysis of
conforming discretizations of these spaces are non-trivial and subject of ongoing research.

In this paper we continue our study of the discontinuous Petrov--Galerkin method
with optimal test functions (DPG method) for fourth-order problems.
Previously, we considered plate and shell problems of Kirchhoff--Love type in
\cite{FuehrerHH_21_TOB,FuehrerH_19_FDD,FuehrerHN_19_UFK,FuehrerHN_22_DMS}
and a problem of divergence type in \cite{FuehrerHH_22_DMF}.
The DPG method is rather a framework, proposed by Demkowicz and Gopalakrishnan,
that combines in its standard form ultraweak variational formulations,
product (or ``broken'') test spaces, the use of independent
trace variables and optimal test functions, see \cite{DemkowiczG_14_ODM} for an
early overview. Main advantages are the inherent discrete inf-sup stability,
induced stiffness matrices that are symmetric, positive definite (for real problems),
and the fact that there are built-in local error estimators with (optional) induced adaptivity.

The fact that ultraweak formulations have field variables that are only $L_2$-regular
becomes an additional advantage when aiming at conformity.
Continuity constraints on discretizations are easier to handle for traces
than for field variables in the corresponding energy spaces
(like $\bH(\Curl^2,\Omega)$ and $\bH(\Grad\Curl,\Omega)$ mentioned before).
This allowed us to provide conforming approximations for non-convex Kirchhoff--Love plates
that include bending moments and the effective shear force, cf.~\cite{FuehrerHN_19_UFK},
with degrees of freedom that require little more than the standard energy regularity.

In this paper we consider the quad-curl problem in the form of operator
$-\Curl\Div\Grad\Curl$, both in three and two space dimensions.
This model is introduced in the next section.
In Section~\ref{sec_DPG} we develop a variational formulation for the model problem
that is based on a second-order system for the variables $\bu$ and $\PP:=-\Grad\Curl\bu$.
We state its well-posedness and the quasi-optimal convergence of the corresponding DPG scheme
(Theorem~\ref{thm}). The analysis of the variational formulation requires to introduce and study
some trace operators. This is done in the preceding Section~\ref{sec_traces}.
A proof of Theorem~\ref{thm} is given in Section~\ref{sec_pf}.
A fully discrete analysis, including the approximation of optimal test functions
and a priori error estimates, is presented for two space dimensions in Section~\ref{sec_Stokes}.
For illustration, we consider the Stokes problem with velocity $\bu$ represented as
the curl of a scalar function $u$. The advantage of such a formulation is that
it provides a pressure-robust approximation. Though the velocity has to be
determined in a post-processing step. This can be done in a piecewise or continuous manner,
giving a piecewise or globally divergence-free velocity approximation.
Here, we present an estimate for a piecewise approximation
(Corollary~\ref{cor_DPG_Stokes} in Section~\ref{sec_Stokes_DPG}).
We note that in \cite{RobertsBTD_14_DMS}, Roberts \emph{et al.} present and analyze a DPG scheme
for the Stokes problem that is based on a first-order system, a standard DPG approach for second-order
problems. In this way, both the velocity and pressure are directly approximated.
Furthermore, in \cite{EllisDC_14_LCD}, Ellis \emph{et al.} propose a DPG technique
for fluid problems that is locally conservative.

Our transformation of the Stokes problem leads to a formulation that represents the
bi-Laplacian. In fact, $-\Curl\Div\Grad\Curl\bu=\Delta^2\bu$ in two dimensions with
scalar function $\bu$. We make use of the implied relations of trace operators to apply,
to the Stokes problem, the DPG discretization for the Kirchhoff--Love plate bending problem in
\cite{FuehrerHN_19_UFK,FuehrerH_19_FDD}, with two important differences.
First, in the transformed Stokes problem, the right-hand side function becomes
an $H^{-1}(\Omega)$-load, a functional acting on $H^1_0(\Omega)$.
We follow the technique from \cite{FuehrerHK_22_MSO} to deal with such a load.
Second, the a priori error estimate derived in \cite{FuehrerHN_19_UFK} requires the solution
to be $H^4(\Omega)$-regular. This is usually not satisfied even for
convex polygonal plates. Here, we refine the a priori estimate to functions which
are $H^3(\Omega)$-regular (Proposition~\ref{prop_DPG_Stokes_L2}), and thus improve
the analysis in \cite{FuehrerHN_19_UFK} for Kirchhoff--Love plates.
In  Section~\ref{sec_num} we present several numerical examples for the Stokes problem.

In the following, we use the generic notation $a\lesssim b$ to indicate that $a\le cb$ with a
constant $c>0$ that is independent of involved functions and other data, except noted otherwise.
Notation $a\gtrsim b$ is used analogously, and $a\simeq b$ means that $a\lesssim b$ and
$b\lesssim a$.

\section{Model problem}

We consider a bounded, simply connected Lipschitz domain $\Omega\subset\R^\di$ ($\di\in\{2,3\}$)
with boundary $\G=\partial\Omega$ and exterior unit normal vector $\bn$ along $\G$, and
use the standard differential operators
\begin{align*}
   \Grad\begin{pmatrix}v_1\\\vdots\\v_\di\end{pmatrix}
   :=\begin{pmatrix} \grad v_1, \ldots, \grad v_\di\end{pmatrix}^\top,\quad
   \Div\begin{pmatrix}\bq_1, \ldots, \bq_\di\end{pmatrix}^\top
   :=\begin{pmatrix} \div\bq_1\\ \vdots\\ \div\bq_\di\end{pmatrix},\quad
   \Curl\bv:= \begin{cases} \grad\times\bv & (\di=3),\\ (\grad v)^\perp & (\di=2),\end{cases}
\end{align*}
where $\grad$ and $\div$ are the standard gradient and divergence operators, and
$\begin{pmatrix}v_1\\v_2\end{pmatrix}^\perp := \begin{pmatrix}v_2\\-v_1\end{pmatrix}$.
For ease of presentation we introduce the formal adjoint operators $\aGrad$ and $\aCurl$
of $\Grad$ and $\Curl$, respectively. Of course, $\aGrad=-\Div$ and
\[
    \aCurl\bv = \begin{cases} \Curl\bv & (\di=3),\\ \rot\bv:=\div(\bv^\perp) & (\di=2).\end{cases}
\]
Now, for a given vector function $\ff$ and a constant $\gamma>0$, our model problem in three dimensions is
\begin{align} \label{prob3d}
   -\Curl\Div\Grad\Curl\bu + \gamma\bu = \ff\quad\text{in}\ \Omega,\quad
   \bn\times\bu=\Curl\bu = 0\quad\text{on}\ \Gamma.
\end{align}
In two dimensions, and $f$ being a scalar function, the problem reads
\begin{align} \label{prob2d}
   -\rot\Div\Grad\curl u + \gamma u = f\quad\text{in}\ \Omega,\quad
   u=0,\ \curl u = 0\quad\text{on}\ \Gamma.
\end{align}
In this case, $\gamma=0$ is permitted.
We note that $\mathrm{tr}(\Grad\Curl\bu)=\div\Curl\bu=0$  for $\di=2,3$
(tr referring to the trace of matrices), and
\begin{equation} \label{rep}
   -\Curl\Div\Grad\Curl\bu = \Delta^2\bu - \grad\Delta\div\bu\quad(\di=3),\quad
    -\rot\Div\Grad\curl u = \Delta^2 u\quad(\di=2).
\end{equation}
Let us introduce several Sobolev spaces. For a subdomain $\omega\subset\Omega$,
$L_2(\omega)$, $H^1(\omega)$, $H^2(\omega)$ are the standard spaces, and
$\bL_2(\omega):=L_2(\omega)^3$, $\LL_2(\omega):=L_2(\omega)^{\di\times\di}$,
$\bH^1(\omega)=H^1(\omega)^3$, $\HH^1(\omega):=H^1(\omega)^{\di\times\di}$. Furthermore, we introduce
\begin{align*}
   &\LL_2^s(\omega):=\{\QQ\in\LL_2(\omega);\; \QQ=\QQ^\top\},\quad
   \LL_2^\perp(\omega):=\{\QQ\in\LL_2(\omega);\; \mathrm{tr}(\QQ)=0\},\\
   &\HGradCurl{\omega}:=\{\bv\in\bL_2(\omega);\; \GCurl\bv\in\LL_2(\omega)\},\\
   &\HGradCurla{\omega}:=
   \begin{cases} \{\QQ\in\LL_2^\perp(\omega);\; \Curl\Div\QQ\in\bL_2(\omega)\} & (\di=3),\\
                 \{\QQ\in\LL_2^\perp(\omega);\; \rot\Div\QQ\in L_2(\omega)\} & (\di=2).
   \end{cases}
\end{align*}
In $\HGradCurl{\omega}$, $\HGradCurla{\omega}$
we use the respective graph norms $\|\cdot\|_{\Grad\Curl,\omega}$, $\|\cdot\|_{\aCurl\aGrad,\omega}$.
The $L_2(\omega)$, $\bL_2(\omega)$, and $\LL_2(\omega)$ inner products are generically denoted
by $\vdual{\cdot}{\cdot}_\omega$, and the norms by $\|\cdot\|_\omega$.
Generally, we drop the index $\omega$ when $\omega=\Omega$.

We also need the sets $\cD(\Omega)$, $\cbD(\Omega)$ of smooth scalar and vector functions,
respectively, with compact support in $\Omega$. Furthermore,
$\cDD^\perp(\Omega)$ refers to the space of smooth tensors with zero trace and compact support
in $\Omega$, and $\cDD^\perp(\overline\Omega)$ denotes the restriction to $\Omega$ of $C^\infty(\R^\di)$
$\di\times\di$-tensor functions with zero trace.

In the following we consider problems \eqref{prob3d}, \eqref{prob2d} in one generic form
to analyze them together within a single framework.
To this end, we slightly abuse our notation, and tacitly identify
\begin{align*}
   \Curl = \begin{cases} \Curl,\\ \curl,\end{cases}\
   \bL_2(\omega) = \begin{cases} \bL_2(\omega),\\ L_2(\omega),\end{cases}\
   \bH^1(\omega) = \begin{cases} \bH^1(\omega),\\ H^1(\omega),\end{cases}\
   \cbD(\Omega) = \begin{cases} \cbD(\Omega) & \quad\text{if}\ \di=3,\\
                                \cD(\Omega) & \quad\text{if}\ \di=2.\end{cases}
\end{align*}
Furthermore, if $\di=3$, bold-face symbols $\bu,\bv,\ff,\ldots$ indicate vector-valued functions
which are understood to be scalar for $\di=2$. In the latter case,
$\HGradCurl{\omega}=H^2(\omega)$ and $\HGradCurla{\omega}=\HH(\rot\Div,\omega)$ (not explicitly introduced).

To represent the homogeneous Dirichlet conditions of \eqref{prob3d} and \eqref{prob2d},
we introduce $\HGradCurlz{\Omega}$ as the completion of $\cbD(\Omega)$ with respect to norm
$\|\cdot\|_{\GCurl}$. We will see that
\begin{align*}
   &\HGradCurlz{\Omega} =
   \begin{cases}
      \{\bv\in \HGradCurl{\omega};\; \bn\times\bu|_\G=\Curl\bu|_\G = 0\} & (\di=3),\\
      \{v\in H^2(\Omega);\; u|_\G=0,\ \grad\!u|_\G=0\} = H^2_0(\Omega)   & (\di=2),
   \end{cases}
\end{align*}
cf.~Proposition~\ref{prop_zero} below.

Finally, the generic form of problems \eqref{prob3d}, \eqref{prob2d} reads
\begin{align} \label{prob}
   \bu\in\HGradCurlz{\Omega}:\quad
   \GCurla\GCurl\bu + \gamma\bu = \ff\quad\text{in}\ \Omega.
\end{align}
Recall that $\gamma$ is a fixed non-negative constant, and $\gamma>0$ when $\di=3$.

\section{Trace operators} \label{sec_traces}

Let $\cT=\{T\}$ be a mesh of non-intersecting Lipschitz polyhedra (polygons in $\R^2$),
$\overline\Omega=\cup\{\overline T;\; T\in\cT\}$. The union of boundaries generates the skeleton
$\cS:=\{\partial T;\;T\in\cT\}$. We need the corresponding product spaces
\(
   \HGradCurl{\cT},\quad \HGradCurla{\cT}
\)
with canonical product norms $\|\cdot\|_{\GCurl,\cT}$ and $\|\cdot\|_{\GCurla,\cT}$, respectively.
Throughout this paper we identify elements of $\cT$-product spaces and piecewise defined functions
on $\cT$. The generic $L_2(\cT)$-bilinear form is denoted by $\vdual{\cdot}{\cdot}_\cT$.

We define the following trace operators with support on $\cS$,
\begin{align*}
   \tr{}:\;
   &\begin{cases}
      \HGradCurl{\Omega} &\to\ \HGradCurla{\cT}'\\
      \qquad \bv    &\mapsto\ \dualGC{\tr{}(\bv)}{\QQ}_\cS
                    := \vdual{\GCurl\bv}{\QQ} - \vdual{\bv}{\GCurla\QQ}_\cT
   \end{cases},
   \\
   \tra{}:\;
   &\begin{cases}
      \HGradCurla{\Omega} &\to \HGradCurl{\cT}' \\
      \qquad \QQ    &\mapsto\ \dualGCa{\tra{}(\QQ)}{\bv}_\cS
                    := \vdual{\GCurl\bv}{\QQ}_\cT - \vdual{\bv}{\GCurla\QQ}
   \end{cases}.
\end{align*}
The particular cases mapping to $\HGradCurla{\Omega}'$ and $\HGradCurl{\Omega}'$, respectively,
are denoted as $\tr{\Gamma}$ and $\tra{\Gamma}$:
\begin{align*}
   \dualGC{\tr{\Gamma}(\bv)}{\QQ}_\G := \dualGCa{\tra{\Gamma}(\QQ)}{\bv}_\G
   := \vdual{\GCurl\bv}{\QQ} - \vdual{\bv}{\GCurla\QQ}
\end{align*}
for $\bv\in \HGradCurl{\Omega}$, $\QQ\in\HGradCurla{\Omega}$.

\begin{remark} \label{rem_trace}
We note that, for sufficiently smooth tensor functions $\QQ$,
\begin{align*}
   \dualGC{\tr{\Gamma}(\bv)}{\QQ}_\G = \dualGCa{\tra{\Gamma}(\QQ)}{\bv}_\G
   = \begin{cases}
      \dual{\QQ\bn}{\Curl\bv}_{L_2(\G)} - \dual{\Div\QQ}{\bn\times\bv}_{L_2(\G)} & (\di=3),\\
      \dual{\QQ\bn}{\Curl\bv}_{L_2(\G)} + \dual{\bt\cdot\Div\QQ}{\bv}_{L_2(\G)} & (\di=2).
   \end{cases}
\end{align*}
Here, $\dual{\cdot}{\cdot}_{L_2(\G)}$ denotes the generic duality between negative and positive-order
Sobolev spaces on $\G$ (in any order), with $L_2(\G)$ as pivot space, and $\bt$ is the unit tangential
vector along $\G$ in mathematically positive orientation.
In three dimensions, functions $\bv\in\HGradCurl{\Omega}$
satisfy $\bv\in\bL_2(\Omega)$ and $\Curl\bv\in\bH^1(\Omega)$.
Therefore, their traces $\Curl\bv|_\G\in \bH^{1/2}(\G)$ and $\bn\times\bv|_\G\in\bH^{-1/2}(\G)$
are well defined in the canonical way, and give rise to bounded operators.
In two dimensions, it is clear that $\bv\in H^2(\Omega)$ has bounded traces $(\bv,\grad\bv)|_\G$,
cf.~\cite{Grisvard_85_EPN,CostabelD_96_IBS,FuehrerHN_19_UFK}.
On the other hand, traces $\QQ\bn|_\G$ and $\Div\QQ|_\G$ are not well defined for
$\QQ\in\HGradCurla{\Omega}$ since $\Div\QQ=-\aGrad\QQ\in\bL_2(\Omega)$ is not guaranteed.
We refer to \cite[Remark~3.1 and \S 6.2.2.]{FuehrerHN_19_UFK} for further details.
\end{remark}

Trace operators $\tr{}$, $\tra{}$ give rise to the following trace spaces,
\begin{align*}
   &\HGC(\cS):=\tr{}\bigl(\HGradCurl{\Omega}\bigr),\quad
   \HGCz(\cS):=\tr{}\bigl(\HGradCurlz{\Omega}\bigr),\\
   &\HGCa(\cS):=\tra{}\bigl(\HGradCurla{\Omega}\bigr),
\end{align*}
furnished with their respective operator and trace norms,
\begin{align*}
   \|\tbv\|_{(\GCurla,\cT)'}
   &:= \sup \{\dualGC{\tbv}{\QQ}_\cS;\; \QQ\in\HGradCurla{\cT},\ \|\QQ\|_{\GCurla,\cT}=1\},\\
   \|\tbv\|_{\inf}
   &:= \inf\{\|\bv\|_{\GCurl};\; \bv\in\HGradCurl{\Omega},\ \tr{}(\bv)=\tbv\}
   \qquad \bigl(\tbv\in \HGC(\cS)\bigr),\\
   \|\tQ\|_{(\GCurl,\cT)'}
   &:= \sup \{\dualGCa{\tQ}{\bv}_\cS;\; \bv\in\HGradCurl{\cT},\ \|\bv\|_{\GCurl,\cT}=1\},\\
   \|\tQ\|_{\inf^*}
   &:= \inf\{\|\QQ\|_{\GCurla};\; \QQ\in\HGradCurla{\Omega},\ \tra{}(\QQ)=\tQ\}
   \qquad \bigl(\tQ\in \HGCa(\cS)\bigr).
\end{align*}
Here, the dualities $\dualGC{\cdot}{\cdot}_\cS$ and $\dualGCa{\cdot}{\cdot}_\cS$ are defined
to be consistent with the definition of the corresponding trace operator. That is,
\begin{align*}
   &\dualGC{\tbv}{\QQ}_\cS := \dualGC{\bv}{\QQ}_\cS
   \quad\text{for any}\ \bv\in(\tr{})^{-1}(\tbv)\quad
   \bigl(\tbv\in\HGC(\cS),\ \QQ\in\HGradCurla{\cT}\bigr),\\
   &\dualGCa{\tQ}{\bv}_\cS := \dualGCa{\QQ}{\bv}_\cS
   \quad\text{for any}\ \QQ\in(\tra{})^{-1}(\tQ)\quad
   \bigl(\tQ\in\HGCa(\cS),\ \bv\in\HGradCurl{\cT}\bigr).
\end{align*}

\begin{prop} \label{prop_traces}
The following norm identities hold true,
\begin{align*}
   \|\tbv\|_{\inf} &= \|\tbv\|_{(\GCurla,\cT)'}\quad (\tbv\in \HGC(\cS)),\\
   \|\tQ\|_{\inf^*} &= \|\tQ\|_{(\GCurl,\cT)'}\quad (\tQ\in \HGCa(\cS)).
\end{align*}
\end{prop}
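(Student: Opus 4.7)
The plan is to prove both identities by the same minimum-energy-extension template: the ``$\le$'' direction in each line is elementary duality, while the ``$\ge$'' direction exploits a minimum-norm representative in the ``opposite'' energy space to construct a near-optimal test element. I would write up the first identity and then obtain the second by an entirely symmetric argument, with the roles of $\bv,\QQ$ and of $\GCurl,\GCurla$ interchanged.

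For the easy bound $\|\tbv\|_{(\GCurla,\cT)'}\le\|\tbv\|_{\inf}$, I pick any representative $\bv\in\HGradCurl{\Omega}$ with $\tr{}(\bv)=\tbv$ and apply Cauchy--Schwarz to the defining formula of $\dualGC{\tbv}{\QQ}_\cS$: this yields $|\dualGC{\tbv}{\QQ}_\cS|\le\|\bv\|_{\GCurl}\|\QQ\|_{\GCurla,\cT}$ for every $\QQ\in\HGradCurla{\cT}$, and taking the supremum over $\QQ$ and then the infimum over $\bv$ closes the estimate. The same two-line calculation, with $\bv$ and $\QQ$ swapped, handles the easy bound for the second identity.

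For the reverse bound, I select the minimum-graph-norm extension $\bv^*\in\HGradCurl{\Omega}$ of $\tbv$, which exists and is unique as the orthogonal projection of $0$ onto the nonempty closed affine subspace $\{\bv\in\HGradCurl{\Omega}:\tr{}(\bv)=\tbv\}$ (nonempty by definition of $\HGC(\cS)$, closed by continuity of $\tr{}$). Setting $\QQ^*:=\GCurl\bv^*\in\LL_2(\Omega)$, the crux is to show $\QQ^*\in\HGradCurla{\Omega}$. For this I invoke the Euler--Lagrange identity $\vdual{\bv^*}{\bw}+\vdual{\GCurl\bv^*}{\GCurl\bw}=0$ for all $\bw\in\ker(\tr{})$, apply it to compactly supported smooth $\bw\in\cbD(\Omega)\subset\HGradCurlz{\Omega}\subset\ker(\tr{})$ (the last inclusion being routine via piecewise integration by parts), and read off $\GCurla\QQ^*=-\bv^*\in\bL_2(\Omega)$ in the distributional sense; the zero-trace property $\mathrm{tr}(\QQ^*)=\div\Curl\bv^*=0$ is automatic from the identity already recorded in the paper. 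A short computation then yields $\dualGC{\tbv}{\QQ^*}_\cS=\|\bv^*\|_{\GCurl}^2=\|\QQ^*\|_{\GCurla,\cT}^2$, so inserting $\QQ^*$ into the definition of $\|\tbv\|_{(\GCurla,\cT)'}$ gives the desired lower bound $\|\bv^*\|_{\GCurl}=\|\tbv\|_{\inf}$.

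The principal obstacle is verifying that the minimum-energy extension really intertwines the two energy spaces via $\GCurl$, i.e., that the candidate $\QQ^*$ lies in $\HGradCurla{\Omega}$; everything hinges on the inclusion $\cbD(\Omega)\subset\ker(\tr{})$, which justifies admitting compactly supported smooth functions as test objects in the Euler--Lagrange equation. Once the first identity is in hand, the second is proved by the mirror construction: take the minimum-norm extension $\QQ^*\in\HGradCurla{\Omega}$ of $\tQ$, set $\bv^*:=-\GCurla\QQ^*\in\bL_2(\Omega)$, test the associated Euler--Lagrange identity against $\bR\in\cDD^\perp(\Omega)\subset\ker(\tra{})$ to obtain $\GCurl\bv^*=\QQ^*\in\LL_2(\Omega)$, so that $\bv^*\in\HGradCurl{\Omega}$, and perform the analogous norm bookkeeping to conclude $\dualGCa{\tQ}{\bv^*}_\cS=\|\QQ^*\|_{\GCurla}^2=\|\bv^*\|_{\GCurl,\cT}^2$.
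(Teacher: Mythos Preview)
Your overall strategy coincides with the paper's: both construct the optimal test element and extension explicitly and verify $\dualGC{\tbv}{\QQ}_\cS=\|\bv\|_{\GCurl}^2=\|\QQ\|_{\GCurla,\cT}^2$. The paper starts from the Riesz representer $\QQ$ of $\tbv$ in $\HGradCurla{\cT}$ and then defines $\bv$ by a second variational problem in $\HGradCurl{\Omega}$; you start from the minimum-norm extension $\bv^*$ and set $\QQ^*=\GCurl\bv^*$. These two routes produce the same pair.

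There is, however, a genuine slip in your argument. The inclusion $\HGradCurlz{\Omega}\subset\ker(\tr{})$ is \emph{false} as soon as $\cT$ has more than one element: $\tr{}$ maps into $\HGradCurla{\cT}'$ and encodes traces on the \emph{whole skeleton} $\cS$, not just on $\Gamma$. For $\bw\in\cbD(\Omega)$ whose support meets an interior face, and for a piecewise $\QQ\in\HGradCurla{\cT}$, elementwise integration by parts leaves uncancelled interface terms, so $\tr{}(\bw)\neq 0$ in general. Hence your Euler--Lagrange argument does not deliver $\QQ^*\in\HGradCurla{\Omega}$. The repair is immediate and is in fact all you need: the supremum defining $\|\cdot\|_{(\GCurla,\cT)'}$ runs over the \emph{product} space $\HGradCurla{\cT}$, and for each $T\in\cT$ one does have $\cbD(T)\subset\ker(\tr{})$ (after zero extension). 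Testing the Euler--Lagrange identity against such $\bw\in\cbD(T)$ gives $\GCurla\QQ^*|_T=-\bv^*|_T$ on every $T$, so $\QQ^*\in\HGradCurla{\cT}$ with $\|\QQ^*\|_{\GCurla,\cT}=\|\bv^*\|_{\GCurl}$, and your norm bookkeeping then goes through verbatim. The same correction (replace $\cDD^\perp(\Omega)$ by $\cDD^\perp(T)$ and conclude $\bv^*\in\HGradCurl{\cT}$ rather than $\HGradCurl{\Omega}$) applies to the mirror argument for the second identity.
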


\begin{proof}
The two relations are proved in the canonical way. For first-order operators (traces of $H^1$ and
$H(\div)$) see~\cite[Lemma~2.2, Theorem~2.3]{CarstensenDG_16_BSF}. For the general case
there is a framework proposed in \cite[Appendix A]{DemkowiczGNS_17_SDM}, though building upon
certain density properties. For our case of second-order traces (stemming from second-order
derivatives), there is a canonical procedure given in
\cite[Proofs of Lemma~3.2, Proposition 3.5]{FuehrerHN_19_UFK}. Let us recall the essential steps.
Bounds $\|\cdot\|_{(\GCurla,\cT)'}\le \|\cdot\|_{\inf}$ and
$\|\cdot\|_{(\GCurl,\cT)'}\le \|\cdot\|_{\inf^*}$ are due to the Cauchy--Schwarz inequality
applied to the $L_2$-dualities of the variational definitions of the traces.

To prove the bound $\|\cdot\|_{\inf}\le \|\cdot\|_{(\GCurla,\cT)'}$ define, for a given
$\tbv\in \HGC(\cS)\setminus\{0\}$, functions $\QQ\in\HGradCurla{\cT}$ and $\bv\in\HGradCurl{\Omega}$
as the solutions to
\[
   \vdual{\QQ}{\dQQ} + \vdual{\GCurla\QQ}{\GCurla\dQQ}_\cT
   = \dualGC{\tbv}{\dQQ}_\cS\quad\forall\dQQ\in\HGradCurla{\cT}
\]
and
\[
   \vdual{\bv}{\dbv} + \vdual{\GCurl\bv}{\GCurl\dbv}
   = \dualGC{\tr{}(\dbv)}{\QQ}_\cS\quad\forall\dbv\in\HGradCurl{\Omega},
\]
respectively. One then proves that $\bv=-\GCurla\QQ$ on every $T\in\cT$,
$\tr{}(\bv)=\tbv$, and $\|\QQ\|_{\GCurla,\cT}^2=\|\bv\|_{\GCurl}^2=\dualGC{\tbv}{\QQ}_\cS$,
leading to the desired estimate,
\[
   \|\tbv\|_{(\GCurla,\cT)'}
   \ge \frac {\dualGC{\tbv}{\QQ}_\cS}{\|\QQ\|_{\GCurla,\cT}}
   = \|\bv\|_{\GCurl} = \|\tbv\|_{\inf}.
\]
The remaining inequality is proved completely analogously, by the construction chain
\[
   \HGCa(\cS)\setminus\{0\}\ni\tQ\mapsto \bv\in\HGradCurl{\cT}\mapsto \QQ\in\HGradCurla{\Omega}
\]
with $\tra{}(\QQ)=\tQ$ and
$\|\QQ\|_{\GCurla}^2=\|\bv\|_{\GCurl,\cT}^2=\dualGCa{\tQ}{\bv}_\cS$, giving
\[
   \|\tQ\|_{(\GCurl,\cT)'}
   \ge \frac {\dualGCa{\tQ}{\bv}_\cS}{\|\bv\|_{\GCurl,\cT}}
   = \|\QQ\|_{\GCurla} = \|\tQ\|_{\inf^*}.
\]
\end{proof}

\begin{prop} \label{prop_dense}
A function $\bv\in\HGradCurl{\Omega}$ satisfies
\[
   \bv\in\HGradCurlz{\Omega}\quad\Leftrightarrow\quad
   \dualGC{\tr{\Gamma}(\bv)}{\QQ}_\G=0\ \forall\QQ\in\HGradCurla{\Omega}.
\]
Furthermore, $\cDD^\perp(\overline\Omega)$ is dense in $\HGradCurla{\Omega}$.
\end{prop}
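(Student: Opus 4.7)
The plan is to prove the two assertions in sequence, each via a Hahn--Banach/Riesz argument, after an easy preliminary. For the forward implication of the first assertion, take $\bv_n\in\cbD(\Omega)$ with $\bv_n\to\bv$ in $\|\cdot\|_{\GCurl}$; for every $\bv_n$ and every $\QQ\in\HGradCurla{\Omega}$, the duality pairing $\dualGC{\tr{\Gamma}(\bv_n)}{\QQ}_\G=\vdual{\GCurl\bv_n}{\QQ}-\vdual{\bv_n}{\GCurla\QQ}$ vanishes because $\bv_n$ has compact support in $\Omega$. Continuity of both $L_2$-pairings transfers the vanishing to the limit.

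For the converse of the first assertion I would use Hahn--Banach in the Hilbert space $\HGradCurl{\Omega}$. A continuous linear functional annihilating $\cbD(\Omega)$ can, by Riesz, be written as $\bu\mapsto\vdual{\bu}{\bw}+\vdual{\GCurl\bu}{\NN}$ with $\bw\in\bL_2(\Omega)$ and $\NN\in\LL_2^\perp(\Omega)$ (the trace-free restriction being harmless since $\mathrm{tr}(\GCurl\bu)=\div\Curl\bu=0$). Testing against $\cbD(\Omega)$ in the distributional sense forces $\bw=-\GCurla\NN$, which places $\NN$ in $\HGradCurla{\Omega}$ and identifies the functional with $\bv\mapsto\dualGC{\tr{\Gamma}(\bv)}{\NN}_\G$. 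Under the hypothesis this is zero for every such $\NN$, so the functional vanishes identically and $\bv$ lies in the closure of $\cbD(\Omega)$, namely in $\HGradCurlz{\Omega}$.

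For the density statement I would apply the same Hahn--Banach template to $\HGradCurla{\Omega}$. Assuming non-density, a nonzero continuous functional $L$ vanishing on $\cDD^\perp(\overline\Omega)$ is represented by a pair $(\MM,\bu)\in\LL_2^\perp(\Omega)\times\bL_2(\Omega)$ as $L(\QQ)=\vdual{\QQ}{\MM}+\vdual{\GCurla\QQ}{\bu}$. Testing on the interior class $\cDD^\perp(\Omega)$ shows that $\MM+\GCurl\bu$ is distributionally orthogonal to all smooth trace-free tensors of compact support, hence a scalar multiple of the identity; since $\mathrm{tr}(\MM)=0$ and $\mathrm{tr}(\GCurl\bu)=\div\Curl\bu=0$, this scalar must vanish, yielding $\MM=-\GCurl\bu$ and in particular $\bu\in\HGradCurl{\Omega}$. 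With this identification, $L(\QQ)=-\dualGC{\tr{\Gamma}(\bu)}{\QQ}_\G$, and the residual hypothesis becomes $\dualGC{\tr{\Gamma}(\bu)}{\QQ}_\G=0$ for every $\QQ\in\cDD^\perp(\overline\Omega)$. It then suffices to show $\bu\in\HGradCurlz{\Omega}$, for the already established first part of the proposition would then extend the vanishing to all of $\HGradCurla{\Omega}$ and force $L\equiv 0$, contradicting $L\neq 0$.

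The main obstacle is this last implication: passing from vanishing of the trace pairing on $\cDD^\perp(\overline\Omega)$ to membership of $\bu$ in $\HGradCurlz{\Omega}$. Here I would exploit the explicit boundary representation in Remark~\ref{rem_trace}, which for sufficiently smooth $\QQ$ reduces the pairing to boundary integrals of $\QQ\bn$ against $\Curl\bu$ and of $\Div\QQ$ against $\bn\times\bu$ (with the analogous formula in two dimensions). Because smooth trace-free tensors on $\overline\Omega$ are flexible enough to prescribe $\QQ|_\G$ and $\partial_\bn\QQ|_\G$ essentially freely (the trace-free constraint being a single pointwise condition among $\di^2$ components), the two boundary data $\QQ\bn|_\G$ and $\Div\QQ|_\G$ may be populated independently, forcing $\Curl\bu|_\G=0$ and $\bn\times\bu|_\G=0$ (respectively $\grad u|_\G=0$ and $u|_\G=0$ in two dimensions). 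A standard cut-off and mollification argument in local boundary charts then produces a sequence in $\cbD(\Omega)$ converging to $\bu$ in $\|\cdot\|_{\GCurl}$, placing $\bu$ in $\HGradCurlz{\Omega}$ and closing the proof.
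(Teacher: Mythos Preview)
Your argument is essentially correct and is organized differently from the paper's. The paper first proves the equivalence with $\cDD^\perp(\overline\Omega)$ in place of $\HGradCurla{\Omega}$ (the reverse implication by directly extending $\bv$ by zero and mollifying, citing \cite[Lemma 2.4]{GiraultR_86_FEM}), then uses that result to show density, which in turn upgrades the equivalence to all of $\HGradCurla{\Omega}$. Your Hahn--Banach route for the converse of the first assertion is in fact cleaner than the paper's: it delivers the full-space equivalence directly, without waiting for density. For the density statement, however, you arrive at exactly the same sub-lemma the paper proves as its Step~1, namely that vanishing of $\dualGC{\tr{\Gamma}(\bu)}{\QQ}_\G$ for all $\QQ\in\cDD^\perp(\overline\Omega)$ forces $\bu\in\HGradCurlz{\Omega}$.

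The one place where your path is less economical is this last step. You propose to read off $\Curl\bu|_\G=0$ and $\bn\times\bu|_\G=0$ from Remark~\ref{rem_trace} by arguing that $\QQ\bn|_\G$ and $\Div\QQ|_\G$ can be prescribed essentially independently for smooth trace-free $\QQ$, and then mollify. This works, but the independence claim on a merely Lipschitz boundary requires some care (the normal is only $L_\infty$, and density of the resulting trace pairs in the right product space is not entirely trivial). The paper bypasses this by observing that the pairing hypothesis on $\cDD^\perp(\overline\Omega)$ already shows the zero extension $\tilde\bu$ lies in $\HGradCurl{\R^\di}$ distributionally, after which the mollification is immediate. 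You could shorten your argument the same way and avoid the boundary-trace surjectivity discussion altogether.
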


\begin{proof}
We split the proof into two steps. In step 1 we show that the statement holds true when replacing
space $\HGradCurla{\Omega}$ with $\cDD^\perp(\overline\Omega)$. In the second step we show that
$\cDD^\perp(\overline\Omega)$ is dense in $\HGradCurla{\Omega}$.

{\bf Step 1.}
The fact that $\dualGC{\tr{\Gamma}(\bv)}{\QQ}_\G=0$
$\forall\QQ\in\cDD^\perp(\overline\Omega)$ holds for any $\bv\in\HGradCurlz{\Omega}$
is immediate by the density of $\cbD(\Omega)$ in $\HGradCurlz{\Omega}$ and integration by parts.
The reverse implication makes use of the fact that any $\bv\in\HGradCurl{\Omega}$
with $\dualGC{\tr{\Gamma}(\bv)}{\QQ}_\G=0$ $\forall\QQ\in\cDD^\perp(\overline\Omega)$
can be extended by zero to an element $\tilde\bv\in\HGradCurl{\R^\di}$
so that an approximating sequence $(\bphi_k)_k\subset\cbD(\Omega)$
can be constructed by partition of unity and mollifier techniques. We refer to
\cite[Lemma 2.4]{GiraultR_86_FEM} for details.

{\bf Step 2:} Density of $\cDD^\perp(\overline\Omega)\subset \HGradCurla{\Omega}$.
We follow the standard procedure by showing that any $\QQ\in\HGradCurla{\Omega}$ orthogonal to
$\cDD^\perp(\overline\Omega)$ in $\HGradCurla{\Omega}$ vanishes,
cf., e.g., \cite[Theorem~3.26]{Monk_03_FEM}. In fact, given $\QQ\in\HGradCurla{\Omega}$ with
\[
   \vdual{\QQ}{\dQQ}+\vdual{\GCurla\QQ}{\GCurla\dQQ} = 0\quad\forall \dQQ\in\cDD^\perp(\overline\Omega)
\]
and defining $\bv:=\GCurla\QQ$, we find that $\GCurl\bv=-\QQ$
so that $\bv\in\HGradCurl{\Omega}$ and
\[
   \dualGC{\tr{\Gamma}(\bv)}{\dQQ}_\G
   = \vdual{\GCurl\bv}{\dQQ} - \vdual{\bv}{\GCurla\dQQ}
   = 0\quad\forall\dQQ\in\cDD^\perp(\overline\Omega).
\]
We conclude with step 1 that $\bv\in\HGradCurlz{\Omega}$. By definition
of the latter space there is a sequence $(\bphi_j)_j\subset \cbD(\Omega)$ that converges
in $\HGradCurl{\Omega}$ to $\bv$. Therefore,
\begin{align*}
   \vdual{\GCurla\QQ}{\GCurla\QQ} + \vdual{\QQ}{\QQ}
   &= \vdual{\bv}{\GCurla\QQ} - \vdual{\GCurl\bv}{\QQ}\\
   &= \lim_{j\to\infty} \vdual{\bphi_j}{\GCurla\QQ} - \vdual{\GCurl\bphi_j}{\QQ} = 0,
\end{align*}
that is, $\QQ=0$ as wanted.
\end{proof}

\begin{prop} \label{prop_zero}
The characterization
\[
   \HGradCurlz{\Omega}=
   \begin{cases}
      \{\bv\in \HGradCurl{\Omega};\; \Curl\bv|_\G=0,\ \bn\times\bv|_\G=0\} & (\di=3),\\
      \{v\in H^2(\Omega);\; v|_\G=0,\ \grad\! v|_\G=0\} & (\di=2)
   \end{cases}
\]
holds true.
\end{prop}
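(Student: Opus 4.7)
The plan is to combine Proposition~\ref{prop_dense} with the integration-by-parts identities collected in Remark~\ref{rem_trace}. Proposition~\ref{prop_dense} already characterizes $\HGradCurlz{\Omega}$ as the annihilator (via $\tr{\Gamma}$) of $\HGradCurla{\Omega}$, and asserts the density of $\cDD^\perp(\overline\Omega)$ in $\HGradCurla{\Omega}$. The task is therefore to translate this annihilator condition into the classical boundary conditions stated in the proposition.

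For the inclusion $\supseteq$, I would take $\bv\in\HGradCurl{\Omega}$ satisfying the stated classical boundary conditions. In three dimensions, the traces $\Curl\bv|_\G\in\bH^{1/2}(\G)$ and $\bn\times\bv|_\G\in\bH^{-1/2}(\G)$ are well-defined (see Remark~\ref{rem_trace}) and are assumed to vanish, so the smooth formula
\[
   \dualGC{\tr{\Gamma}(\bv)}{\QQ}_\G = \dual{\QQ\bn}{\Curl\bv}_{L_2(\G)} - \dual{\Div\QQ}{\bn\times\bv}_{L_2(\G)}
\]
from Remark~\ref{rem_trace} yields $\dualGC{\tr{\Gamma}(\bv)}{\QQ}_\G=0$ for all $\QQ\in\cDD^\perp(\overline\Omega)$. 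Using the density of $\cDD^\perp(\overline\Omega)$ in $\HGradCurla{\Omega}$ from Proposition~\ref{prop_dense} together with the continuity of $\tr{\Gamma}:\HGradCurl{\Omega}\to\HGradCurla{\Omega}'$, this extends to every $\QQ\in\HGradCurla{\Omega}$, so Proposition~\ref{prop_dense} gives $\bv\in\HGradCurlz{\Omega}$. In two dimensions the same argument is used with the corresponding 2D formula of Remark~\ref{rem_trace}.

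For the converse inclusion $\subseteq$, I would take a sequence $(\bphi_k)\subset\cbD(\Omega)$ converging to $\bv$ in the graph norm $\|\cdot\|_{\GCurl}$. In three dimensions, the map $\bv\mapsto\Curl\bv|_\G$ is continuous $\HGradCurl{\Omega}\to\bH^{1/2}(\G)$ since $\Curl\bv\in\bH^1(\Omega)$ with $\|\Curl\bv\|_{\bH^1(\Omega)}\le\|\bv\|_{\GCurl}$ and the standard trace theorem applies. For the tangential trace, continuity $\HGradCurl{\Omega}\to\bH^{-1/2}(\G)$ follows from the duality identity
\[
   \dual{\bn\times\bv}{\phi}_{L_2(\G)} = \vdual{\Curl\bPhi}{\bv} - \vdual{\bPhi}{\Curl\bv}
\]
for any $\bH^1(\Omega)$-extension $\bPhi$ of a given $\phi\in\bH^{1/2}(\G)$, whose right-hand side is bounded by $\|\bPhi\|_{\bH^1(\Omega)}\|\bv\|_{\GCurl}$. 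Since each $\bphi_k$ has vanishing traces, passing to the limit yields $\Curl\bv|_\G=0$ and $\bn\times\bv|_\G=0$. In two dimensions $\HGradCurl{\Omega}=H^2(\Omega)$ and the standard trace theorem for $H^2$ immediately gives continuity of $v\mapsto(v|_\G,\grad v|_\G)$.

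The main technical point to verify carefully is the continuity of the tangential trace $\bn\times\bv|_\G$ in the three-dimensional setting, since $\bv$ is only $\bL_2$-regular; the $\bH^{-1/2}$-bound hinges on the duality formula above together with $\Curl\bv\in\bL_2(\Omega)$ (which is a consequence of $\Grad\Curl\bv\in\LL_2(\Omega)$). Once this continuity is established, the two-way implication follows mechanically from Proposition~\ref{prop_dense} and Remark~\ref{rem_trace}.
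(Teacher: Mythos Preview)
Your proposal is correct and follows essentially the same route as the paper: both directions rest on Proposition~\ref{prop_dense} together with the integration-by-parts identity of Remark~\ref{rem_trace}, the inclusion $\subseteq$ using continuity of the canonical trace maps (the paper phrases this as closedness of the right-hand space, you as passing to the limit along an approximating sequence), and the inclusion $\supseteq$ via vanishing of the boundary pairing against smooth tensors. Your explicit verification of the $\bH^{-1/2}(\G)$-continuity of $\bv\mapsto\bn\times\bv|_\G$ through the curl integration-by-parts formula is a welcome elaboration of what the paper leaves implicit in Remark~\ref{rem_trace}; just note that the bound $\|\Curl\bv\|\lesssim\|\bv\|_{\GCurl}$ you use there is not immediate from the definition of the graph norm and requires a closed-graph argument (or equivalent).
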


\begin{proof}
Having Proposition~\ref{prop_dense} at hand, the proof of this statement is canonical,
cf.~\cite[Theorem~3.33]{Monk_03_FEM}.
Indeed, $\HGradCurlz{\Omega}$ is a subspace of the space on the right-hand side of
the statement due to the closedness of the latter space (cf.~Remark~\ref{rem_trace})
and the fact that the canonical traces of any $\bphi\in\cbD(\Omega)$ vanish.
On the other hand, if $\bv\in\HGradCurl{\Omega}$ has zero canonical traces
then integration by parts shows that $\dualGC{\tr{\Gamma}(\bv)}{\QQ}_\G=0$
$\forall\QQ\in\cDD^\perp(\overline\Omega)$, that is,
$\bv\in \HGradCurlz{\Omega}$ by Proposition~\ref{prop_dense}.
This proof applies to two and three space dimensions.
Of course, in two dimensions, where $\HGradCurlz{\Omega}=H^2_0(\Omega)$,
the result is well known, see, e.g., \cite{Grisvard_85_EPN}.
\end{proof}

\begin{prop} \label{prop_jumps}
For given $\bv\in\HGradCurl{\cT}$ and $\QQ\in\HGradCurla{\cT}$ the following relations hold true,
\[
   \bv\in \HGradCurlz{\Omega} \quad\Leftrightarrow\quad
   \dualGCa{\tra{}(\deltaQQ)}{\bv}_\cS = 0
   \quad\forall \deltaQQ\in \HGradCurla{\Omega},
\]
\[
   \QQ\in \HGradCurla{\Omega} \quad\Leftrightarrow\quad
   \dualGC{\tr{}(\deltabv)}{\QQ}_\cS = 0
   \quad\forall \deltabv\in \HGradCurlz{\Omega}.
\]
\end{prop}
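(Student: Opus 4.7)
The plan is to derive both equivalences in parallel from Proposition~\ref{prop_dense}, combined with the integration-by-parts definitions of the trace operators. In both cases, the forward direction uses that when the tested function is globally regular, broken and global $L_2$-products coincide, so the skeleton duality collapses to the boundary duality on $\Gamma$, which then vanishes by Proposition~\ref{prop_dense}. The reverse direction proceeds in two steps: first, restricting the hypothesis to compactly supported test functions upgrades the piecewise function to a globally regular one; second, invoking Proposition~\ref{prop_dense} again enforces the homogeneous boundary condition.

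For the first equivalence, the forward direction is immediate from this scheme. For the converse, I would restrict the hypothesis to $\deltaQQ\in\cDD^\perp(\Omega)\subset\HGradCurla{\Omega}$. Since $\deltaQQ$ has compact support in $\Omega$, broken and global products agree, giving $\vdual{\bv}{\GCurla\deltaQQ}=\vdual{\GCurl\bv}{\deltaQQ}_\cT$ for every smooth, trace-free, compactly supported $\deltaQQ$. The delicate point is the trace-freeness constraint on $\deltaQQ$. Nevertheless, since the piecewise $\GCurl\bv$ is itself trace-free ($\mathrm{tr}(\GCurl\bv)=\div\Curl\bv=0$ identically) and $\GCurla$ annihilates multiples of the identity ($\aCurl\grad=0$), splitting any smooth compactly supported test tensor into trace-free and pure-trace parts shows that the pure-trace part contributes nothing to either side. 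This extends the identification to all smooth compactly supported test tensors, yielding $\bv\in\HGradCurl{\Omega}$ with global $\GCurl\bv$ equal to its piecewise counterpart. Feeding this back into the original hypothesis gives $\dualGC{\tr{\Gamma}(\bv)}{\deltaQQ}_\G=0$ for all $\deltaQQ\in\HGradCurla{\Omega}$, so $\bv\in\HGradCurlz{\Omega}$ by Proposition~\ref{prop_dense}.

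The second equivalence follows by a symmetric argument with $\bv$ and $\QQ$ interchanged. In the forward direction, $\QQ\in\HGradCurla{\Omega}$ makes broken and global $\GCurla\QQ$ agree, so $\dualGC{\tr{}(\deltabv)}{\QQ}_\cS$ reduces to $\dualGC{\tr{\Gamma}(\deltabv)}{\QQ}_\G=0$ via Proposition~\ref{prop_dense}. For the converse, restricting to $\deltabv\in\cbD(\Omega)\subset\HGradCurlz{\Omega}$ yields $\vdual{\GCurl\deltabv}{\QQ}=\vdual{\deltabv}{\GCurla\QQ}_\cT$, which identifies the distributional $\GCurla\QQ$ with the piecewise object in $\bL_2(\Omega)$. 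Since $\mathrm{tr}(\QQ)=0$ already holds piecewise, and hence globally, we conclude $\QQ\in\HGradCurla{\Omega}$. No trace subtlety arises here, as $\deltabv$ ranges over the full space $\cbD(\Omega)$ without any trace constraint.

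The main obstacle is the distributional identification in the reverse direction of the first equivalence: one has to justify that trace-free test tensors from $\cDD^\perp(\Omega)$ are rich enough to determine the global $\GCurl\bv$. This hinges on combining $\aCurl\grad=0$ with the automatic trace-freeness of $\GCurl\bv$, the same interplay that justifies working in $\LL_2^\perp$ throughout the paper. Everything else amounts to bookkeeping with the definitions and Proposition~\ref{prop_dense}.
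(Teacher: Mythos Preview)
Your proposal is correct and follows essentially the same route as the paper's proof: both use Proposition~\ref{prop_dense} for the forward directions (the paper via density of $\cbD(\Omega)$, you by invoking the proposition directly), and both restrict to compactly supported smooth test functions to recover global regularity in the reverse directions. You actually supply more detail than the paper on the trace-freeness issue in the reverse direction of the first equivalence---the paper simply asserts ``the regularities $\ldots$ hold by the distributional definition of derivatives'' without unpacking why trace-free test tensors suffice, whereas your splitting argument (using $\aCurl\grad=0$ and $\mathrm{tr}(\GCurl\bv)=0$) makes this explicit.
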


\begin{proof}
The proof uses standard techniques, cf.~\cite[Theorem~2.3]{CarstensenDG_16_BSF} and
\cite[Propositions~3.4,~3.8]{FuehrerHN_19_UFK}.

Given $\bv\in\cbD(\Omega)$, the distributional definition of derivatives shows that
$\dualGCa{\tra{}(\QQ)}{\bv}_\cS = \dualGC{\tr{}(\bv)}{\QQ}_\cS = \dualGC{\tr{\Gamma}(\bv)}{\QQ}_\G = 0$
for any $\QQ\in\HGradCurla{\Omega}$.
The density of $\cbD(\Omega)\subset \HGradCurlz{\Omega}$ then proves direction
``$\Rightarrow$'' of both statements.

Now, for given $\bv\in\HGradCurl{\cT}$ and $\QQ\in\HGradCurla{\cT}$
with $\dualGCa{\tra{}(\dQQ)}{\bv}_\cS=0$ $\forall\dQQ\in\cDD^\perp(\Omega)$ and
$\dualGC{\tr{}(\dbv)}{\QQ}_\cS = 0$ $\forall\dbv\in\cbD(\Omega)$,
the regularities $\bv\in\HGradCurl{\Omega}$, $\QQ\in \HGradCurla{\Omega}$
hold by the distributional definition of derivatives and the definition of trace operators
$\tr{}$, $\tra{}$.
Furthermore, $\bv\in\HGradCurlz{\Omega}$ by Proposition~\ref{prop_dense}.
%
%
\end{proof}

\section{DPG method} \label{sec_DPG}

The Petrov--Galerkin scheme for \eqref{prob} is based upon an ultraweak variational
formulation. We write \eqref{prob} as a second-order system by introducing
$\PP:=-\GCurl\bu$. Testing this relation with $\QQ\in\HGradCurla{\cT}$,
and the remaining equation with $\bv\in\HGradCurl{\cT}$, invoking trace operators $\tr{}$, $\tra{}$,
and choosing trace variables $\tu:=\tr{}(\bu)$, $\tP:=\tra{}(\PP)$, we obtain
\begin{align} \label{b}
   b(\bu,\PP,\tu,\tP;\bv,\QQ) &:=
   \vdual{\bu}{\gamma\bv+\GCurla\QQ}_\cT + \vdual{\PP}{\QQ-\GCurl\bv}_\cT
   + \dualGC{\tu}{\QQ}_\cS + \dualGCa{\tP}{\bv}_\cS\\ \nonumber
   &= L(\bv,\QQ) := \vdual{\ff}{\bv}.
\end{align}
Selecting spaces and (squared) norms
\begin{align*}
   &\UU:=\bL_2(\Omega)\times\LL_2^\perp(\Omega)\times\HGCz(\cS)\times \HGCa(\cS),
   &&\|(\bu,\PP,\tu,\tP)\|_\UU^2 := \|\bu\|^2 + \|\PP\|^2 + \|\tu\|_{\inf}^2 + \|\tP\|_{\inf^*}^2,\\
   &\VV:=\HGradCurl{\cT}\times\HGradCurla{\cT},
   &&\|(\bv,\QQ)\|_\VV^2 := \|\bv\|_{\GCurl,\cT}^2 + \|\QQ\|_{\GCurla,\cT}^2,
\end{align*}
the variational formulation of \eqref{prob} reads
\begin{equation} \label{VF}
   \uu:=(\bu,\PP,\tu,\tP)\in\UU:\quad
   b(\uu,\vv) = L(\vv)\quad\forall \vv=(\bv,\QQ)\in\VV.
\end{equation}
For a given finite-dimensional subspace $\UU_h\subset\UU$ the DPG scheme then is
\begin{equation} \label{DPG}
   \uu_h:=(\bu_h,\PP_h,\tu_h,\tP_h)\in\UU_h:\quad
   b(\uu_h,\vv) = L(\vv)\quad\forall \vv\in\ttt(\UU_h)
\end{equation}
with trial-to-test operator $\ttt:\;\UU\to\VV$ defined as
\[
   \ip{\ttt\ww}{\dvv}_\VV = b(\ww,\dvv)\quad\forall \dvv\in\VV,\ \ww\in\UU.
\]
Here, $\ip{\cdot}{\cdot}_\VV$ is the inner product of $\VV$ that induces norm $\|\cdot\|_\VV$.

\begin{theorem} \label{thm}
Given $\ff\in\bL_2(\Omega)$ and $\gamma\ge 0$ ($\gamma>0$ if $\di=3$), \eqref{VF} and \eqref{DPG}
are well posed, with respective solutions $\uu=(\bu,\PP,\tu,\tP)\in\UU$, $\uu_h\in\UU_h$ that satisfy
$\|\uu\|_\UU\lesssim \|\ff\|$ and
\[
   \|\uu-\uu_h\|_\UU \lesssim \inf\{\|\uu-\ww\|_\UU;\; \ww\in\UU_h\}.
\]
The hidden constants are independent of $\ff$, $\cT$ and $\UU_h$.
Furthermore, $\bu\in\HGradCurlz{\Omega}$, $\PP=-\GCurl\bu\in\HGradCurla{\Omega}$,
$\tr{}(\bu)=\tu$, $\tra{}(\PP)=\tP$, and $\bu$ solves problem \eqref{prob}.
\end{theorem}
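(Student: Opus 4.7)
The plan is to follow the standard DPG framework of Demkowicz--Gopalakrishnan: once $b$ is shown to be bounded on $\UU\times\VV$ and to satisfy the inf--sup bound
\[
   \|\uu\|_\UU \lesssim \sup_{\vv\in\VV} \frac{b(\uu,\vv)}{\|\vv\|_\VV},
\]
the well-posedness of \eqref{VF}, the stability estimate $\|\uu\|_\UU\lesssim\|\ff\|$, and the quasi-optimality of the DPG scheme \eqref{DPG} all follow from the Banach--Ne\v{c}as--Babu\v{s}ka theorem together with the DPG argument with optimal test functions. Boundedness of $b$ is a one-line Cauchy--Schwarz estimate on the $L_2$-pairings, together with Proposition~\ref{prop_traces}, which identifies the infimum trace norms defining $\UU$ with the dual norms of the skeleton pairings appearing in $b$.

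For existence I first solve the strong problem~\eqref{prob}. The bilinear form $(\GCurl\bu,\GCurl\bv)+\gamma(\bu,\bv)$ is coercive on $\HGradCurlz{\Omega}$ -- trivially for $\gamma>0$, and in two dimensions with $\gamma=0$ by Poincar\'e's inequality applied on $H^2_0(\Omega)$ -- so Lax--Milgram yields a unique $\bu\in\HGradCurlz{\Omega}$ with $\|\bu\|_{\GCurl}\lesssim\|\ff\|$. Setting $\PP:=-\GCurl\bu\in\HGradCurla{\Omega}$, $\tu:=\tr{}(\bu)\in\HGCz(\cS)$ and $\tP:=\tra{}(\PP)\in\HGCa(\cS)$, element-by-element integration by parts verifies that the quadruple $(\bu,\PP,\tu,\tP)$ satisfies~\eqref{VF}.

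The inf--sup condition I would derive through the ``breaking'' framework of \cite[Appendix~A]{DemkowiczGNS_17_SDM}. Its \emph{conforming} counterpart -- in which trace unknowns are dropped and test functions are restricted to the conforming subspace $\HGradCurl{\Omega}\times\HGradCurla{\Omega}$ of $\VV$ -- is equivalent to the strong problem: testing with $\bv\in\cbD(\Omega)$ and with $\QQ\in\cDD^\perp(\Omega)$ yields the distributional identities $\gamma\bu-\GCurla\PP=\ff$ and $\PP=-\GCurl\bu$, hence the membership $\PP\in\HGradCurla{\Omega}$ and $\bu\in\HGradCurl{\Omega}$; a subsequent test with arbitrary $\QQ\in\HGradCurla{\Omega}$, combined with Proposition~\ref{prop_dense}, forces $\bu\in\HGradCurlz{\Omega}$, reducing the question back to the Lax--Milgram analysis above. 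The infimum-norm identities of Proposition~\ref{prop_traces} then let the breaking framework transfer the conforming inf--sup constant to the broken formulation~\eqref{VF} with a constant depending only on the continuity of $b$, hence independent of $\cT$. The final identifications $\bu\in\HGradCurlz{\Omega}$, $\PP=-\GCurl\bu$, $\tr{}(\bu)=\tu$, $\tra{}(\PP)=\tP$, and the fact that $\bu$ solves~\eqref{prob}, then follow by applying the same trace calculus to the unique ultraweak solution and invoking Propositions~\ref{prop_jumps} and~\ref{prop_dense}.

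The main technical obstacle is not the DPG or breaking framework itself, which is essentially plug-and-play once the ingredients are assembled, but the careful trace bookkeeping underlying the equivalence between the conforming ultraweak formulation and the strong problem: in particular the step that promotes distributional identities obtained from test functions in $\cbD(\Omega)$ and $\cDD^\perp(\Omega)$ to the correct global regularity and to the appropriate homogeneous boundary conditions encoded in $\HGradCurlz{\Omega}$.
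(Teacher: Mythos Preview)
Your approach is essentially the paper's: boundedness is immediate, the trace norm identities of Proposition~\ref{prop_traces} handle the skeleton terms, and the core analytic input is the Lax--Milgram well-posedness of $\GCurla\GCurl+\gamma$ on $\HGradCurlz{\Omega}$. The paper packages this through the splitting theorem of \cite[Theorem~3.3]{CarstensenDG_16_BSF} (separate inf--sups for trace and field components, the latter obtained from stability of the adjoint problem~\eqref{adj}) rather than the breaking transfer of \cite[Appendix~A]{DemkowiczGNS_17_SDM} that you invoke, but these are two presentations of the same mechanism; in particular the adjoint problem the paper solves reduces, after eliminating $\QQ$, to precisely the coercive problem you treat by Lax--Milgram.

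One slip to fix: the conforming test subspace required by the breaking framework is $\VV_0=\{\vv\in\VV:\text{all trace pairings vanish}\}$, which by Proposition~\ref{prop_jumps} equals $\HGradCurlz{\Omega}\times\HGradCurla{\Omega}$, not $\HGradCurl{\Omega}\times\HGradCurla{\Omega}$ as you write. The zero boundary condition on $\bv$ matters: since $\tP$ ranges over all of $\HGCa(\cS)$, annihilating $\dualGCa{\tP}{\bv}_\cS$ forces $\bv\in\HGradCurlz{\Omega}$. With your larger space, testing the conforming ultraweak problem with a general $\bv\in\HGradCurl{\Omega}$ would impose the spurious natural condition $\tra{\Gamma}(\PP)=0$, and the conforming problem would no longer be equivalent to~\eqref{prob}. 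Once you restrict to $\bv\in\HGradCurlz{\Omega}$ your argument goes through unchanged, and the conforming inf--sup you need is exactly the adjoint stability the paper establishes.
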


\section{Proof of the main result} \label{sec_pf}

To prove Theorem~\ref{thm} we start with the well-posedness of variational formulation \eqref{VF}.
In a second step we give some details on the quasi-optimal convergence of DPG scheme \eqref{DPG}.

{\bf Well-posedness of \eqref{VF}.}
The well-posedness of formulation \eqref{VF} can be seen by the Babu\v{s}ka--Brezzi theory.
The boundedness of functional $L$ and bilinear form $b(\cdot,\cdot)$ is immediate.
Furthermore, noting that
\begin{align*}
   \VV_0 &:=
   \{(\bv,\QQ)\in \VV;\;
     \dualGCa{\tQ}{\bv}_\cS=0\ \forall\tQ\in\HGCa(\cS),\
     \dualGC{\tbv}{\QQ}_\cS=0\ \forall\tbv\in\HGCz(\cS)\}\\
   &= \HGradCurlz{\Omega}\times\HGradCurla{\Omega}
\end{align*}
by Proposition~\ref{prop_jumps}, inf-sup condition
\begin{align} \label{infsup}
   \sup_{0\not=\vv\in \VV} \frac{b(\uu;\vv)}{\|\vv\|_\VV}
   \gtrsim \|\uu\|_\UU \quad\forall\uu\in\UU
\end{align}
follows from the two conditions
\begin{align}
   \label{infsupb}
   \sup_{0\not=\vv\in \VV}
   \frac{b(\uu;\vv)}{\|\vv\|_\VV}
   \gtrsim
   \|\uu\|_{\UU} \quad\forall\uu=(0,0,\tu,\tP)\in\UU
\end{align}
and
\begin{align}
   \label{infsupa}
   \sup_{0\not=\bv\in \VV_0}
   &\frac{b(\uu;\vv)}{\|\vv\|_\VV}
   \gtrsim
   \|\uu\|_\UU \quad\forall\uu=(\bu,\PP,0,0)\in\UU,
\end{align}
cf.~\cite[Theorem~3.3]{CarstensenDG_16_BSF}.
By Proposition~\ref{prop_traces}, \eqref{infsupb} holds with constant $1$,
and \eqref{infsupa} holds due to the stability of the adjoint problem, that reads as follows:
\emph{given $\bg_1\in\bL_2(\Omega)$ and $\GG_2\in\LL_2^\perp(\Omega)$, find
$(\bv,\QQ)\in\VV_0$ such that}
\begin{align} \label{adj}
   \GCurla\QQ + \gamma\bv = \bg_1,\quad
   \QQ-\GCurl\bv = \GG_2.
\end{align}
Indeed, \eqref{adj} is well posed. Eliminating $\QQ$ we are left with
\[
   \GCurla\GCurl\bv + \gamma\bv = \bg_1-\GCurla\GG_2,
\]
in variational form: \emph{find $\bv\in\HGradCurlz{\Omega}$ such that}
\[
   \vdual{\GCurl\bv}{\GCurl\dbv} + \gamma\vdual{\bv}{\dbv}
   =
   \vdual{\bg_1}{\dbv} - \vdual{\GG_2}{\GCurl\dbv}
   \quad\forall\dbv\in\HGradCurlz{\Omega}.
\]
Clearly, using a Poincar\'e inequality for $\di=2$ when $\gamma=0$,
this is a well-posed problem with stable solution
$\|\bv\|_{\GCurl}\lesssim \|\bg_1\| + \|\GG_2\|$. Finally, setting $\QQ:=\GCurl\bv+\GG_2$,
$(\bv,\QQ)$ is the unique and stable solution of adjoint problem \eqref{adj},
\[
   \|\bv\|_{\GCurl} + \|\QQ\|_{\GCurla} \lesssim \|\bg_1\| + \|\GG_2\|.
\]
This bound gives
\[
   \bigl(\|\bu\|^2 + \|\PP\|^2\bigr)^{1/2}
   = \sup_{0\not=(\bg_1,\GG_2)\in\bL_2(\Omega)\times\LL_2(\Omega)}
      \frac {\vdual{\bu}{\bg_1} + \vdual{\PP}{\GG_2}}{\bigl(\|\bg_1\|^2 + \|\GG_2\|^2\bigr)^{1/2}}
   \lesssim
   \sup_{0\not=\vv\in\VV_0} \frac {b(\bu,\PP,0,0;\vv)}{\|\vv\|_\VV},
\]
which is \eqref{infsupa}.

It remains to confirm the final condition of the Babu\v{s}ka--Brezzi theory, injectivity
\[
   \vv\in\VV:\quad b(\duu,\vv)=0\ \forall\duu\in\UU\quad\Rightarrow\quad\vv=0.
\]
Indeed, given such a $\vv\in\VV$, Proposition~\ref{prop_jumps} implies that
$\vv\in\VV_0$. Therefore, $\vv=(\bv,\QQ)$ solves \eqref{adj} with $\bg_1=0$ and $\GG_2=0$.
The well-posedness of \eqref{adj} means that $\vv=0$.

We conclude that formulation \eqref{VF} is well posed and that its solution
$\uu=(\bu,\PP,\tu,\tP)$ satisfies the claimed stability. It is easy to verify
that $\bu\in\HGradCurlz{\Omega}$,
$\PP=-\GCurl\bu\in\HGradCurla{\Omega}$, $\tu=\tr{}(\bu)$, $\tP=\tra{}(\PP)$,
and that $\bu$ solves \eqref{prob}.

{\bf Quasi-uniform convergence of \eqref{DPG}.}
Let us denote by $\cB:\;\UU\to\VV'$ the operator $\uu\mapsto b(\uu,\cdot)$.
Since the DPG scheme delivers the minimizer $\uu_h\in\UU_h$ of the residual
$\|\cB(\cdot)-L\|_{\VV'}$, \eqref{DPG} is well posed
and converges optimally in the residual norm $\|\cB\cdot\|_{\VV'}$. The uniform boundedness
of operator $\cB$ and inf-sup property \eqref{infsup} mean that
$\|\cdot\|_\UU$ and $\|\cB\cdot\|_{\VV'}$ are uniformly equivalent norms. This gives
the quasi-optimal error estimate, and finishes the proof of Theorem~\ref{thm}.

\section{Application to the Stokes problem in $\R^2$} \label{sec_Stokes}

For a simply-connected, bounded Lipschitz domain $\Omega\subset\R^2$ the Stokes problem
with homogeneous Dirichlet condition reads
\begin{align*}
   -\Div\Grad\bu + \grad p = \ff,\ \div\bu=0\ \text{in}\ \Omega,\quad \bu=0\ \text{on}\ \G.
\end{align*}
Since $\bu$ is solenoidal and vanishes on $\Gamma$, it can be represented as $\bu=\curl u$ with $u\in H^1_0(\Omega)$.
The Stokes problem becomes
\begin{align} \label{Stokes}
   -\rot\Div\Grad\curl u = \rot\ff\ \text{in}\ \Omega,\quad u=0,\ \curl u=0\ \text{on}\ \G,
\end{align}
without pressure variable $p$.
Here and in the following, we use the specific notation for $\di=2$, for functions, operators, and spaces.
Problem~\eqref{Stokes} is \eqref{prob} with $\gamma=0$ and a right-hand side
function(al) in $H^{-1}(\Omega):=(H^1_0(\Omega))'$.
Note that the left-hand side operator in \eqref{Stokes} is another form of writing the bi-Laplacian,
cf.~\eqref{rep}. It is therefore
no surprise that our discrete setting will be based upon the discretization of a plate bending
problem. Specifically, we employ the discrete spaces
and techniques from \cite{FuehrerHN_19_UFK,FuehrerH_19_FDD} where a right-hand
side function from $L_2(\Omega)$ was considered. Alternatively, the methods
from \cite{FuehrerHH_21_TOB} apply.
Using the regularization approach from \cite{FuehrerHK_22_MSO}, we extend the setting to include our
$H^{-1}(\Omega)$-functional. Furthermore, we improve the a priori error analysis
from \cite{FuehrerHN_19_UFK} to a setting with solution $u\in H^3(\Omega)$
whereas, in the cited article, $u\in H^4(\Omega)$ was assumed.

To use the setting and technical details from \cite{FuehrerHN_19_UFK} and \cite{FuehrerH_19_FDD},
let us relate corresponding spaces and operators. We extend vector operator $(\cdot)^\perp$ to
tensors by defining column-wise
\[
   \QQ^\perp=\begin{pmatrix}a & b\\ c& d\end{pmatrix}^\perp
   := \begin{pmatrix}c & d\\ -a& -b\end{pmatrix}
   \quad\forall\QQ\in\LL_2(\omega),\ \omega\subset\Omega.
\]
We will need the following space and (squared) norm for a Lipschitz domain $\omega\subset\Omega$,
\begin{align*}
   \HdDiv{\omega}:=\{\QQ\in\LL_2^s(\omega);\; \div\Div\QQ\in L_2(\omega)\},\quad
   \|\QQ\|_{\div\Div,\omega}^2:=\|\QQ\|_\omega^2+\|\div\Div\QQ\|_\omega^2.
\end{align*}
We also use the corresponding product space and norm with index $\cT$ instead of $\omega$.
Finally, let us recall the following trace operators from \cite{FuehrerHN_19_UFK},
\begin{align*}
   \traceGG{}:\;H^2(\Omega)\to\HdDiv{\cT},\quad
   \dual{\traceGG{}(v)}{\QQ}_\cS:=\vdual{v}{\div\Div\QQ}_\cT-\vdual{\Grad\grad v}{\QQ},\\
   \traceDD{}:\;\HdDiv{\Omega}\to H^2(\cT),\quad
   \dual{\traceDD{}(\QQ)}{v}_\cS:=\vdual{v}{\div\Div\QQ}-\vdual{\Grad\grad v}{\QQ}_\cT.
\end{align*}
Note that $\Grad\grad v$ is the Hessian matrix of $v$.

We collect the needed relations in the following lemma. Its proof is straightforward.

\begin{lemma} \label{la_Stokes_tech}
Let $\omega\subset\Omega$ be a Lipschitz domain. The following relations hold true,
\begin{alignat*}{2}
   &\vdual{\QQ}{\Grad\curl v}_\omega = -\vdual{\QQ^\perp}{\Grad\grad v}_\omega
   &&\quad\forall\QQ\in\LL_2(\omega),\ v\in H^2(\omega),\\
   &\rot\Div\QQ=\div\Div\QQ^\perp        &&\quad\forall \QQ\in\HrotDiv{\omega},\\
   &\dual{\tr{}(v)}{\QQ}_\cS = \dual{\traceGG{}(v)}{\QQ^\perp}_\cS
   &&\quad\forall v\in H^2(\Omega),\ \QQ\in\HrotDiv{\cT},\\
   &\dual{\tra{}(\QQ)}{v}_\cS = \dual{\traceDD{}(\QQ^\perp)}{v}_\cS
   &&\quad\forall \QQ\in\HrotDiv{\Omega},\ v\in H^2(\cT).
\end{alignat*}
In particular, we have the identity/equivalence of spaces
\[
   H^2(\omega)=\HGradcurl{\omega}, \qquad
   \QQ\in\HrotDiv{\omega}\Leftrightarrow \QQ^\perp\in\HdDiv{\omega}
\]
with identical norms.
\end{lemma}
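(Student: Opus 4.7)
The plan is to reduce everything to a handful of pointwise algebraic identities about the operator $(\cdot)^\perp$ on $2\times 2$ tensors, together with the defining formulas of the four trace functionals. No hard analysis is required beyond the classical characterization of $H^2$ on a bounded Lipschitz domain.

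For the two pointwise identities I would argue directly on entries. Writing $\grad v=(v_x,v_y)^\top$ one finds
$\Grad\curl v=\begin{pmatrix}v_{xy}&v_{yy}\\-v_{xx}&-v_{xy}\end{pmatrix}=(\Grad\grad v)^\perp$,
while a second entry-wise check yields $\QQ:\bR^\perp=-\QQ^\perp:\bR$ for all $\QQ,\bR\in\LL_2(\omega)$; chaining the two gives the first bullet point. For the second bullet point, writing $\QQ=\begin{pmatrix}a&b\\c&d\end{pmatrix}$, one verifies that both $\rot\Div\QQ$ and $\div\Div\QQ^\perp$ collapse to the same distribution $c_{xx}+d_{xy}-a_{xy}-b_{yy}$.

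Bullet points three and four then follow by substitution into the definitions of $\tr{}$, $\tra{}$, $\traceGG{}$ and $\traceDD{}$, recalling that in two space dimensions $\GCurl v=\Grad\curl v$ and $\GCurla\QQ=-\rot\Div\QQ$ (because $\aGrad=-\Div$ and $\aCurl=\rot$). For the first one this gives $\dualGC{\tr{}(v)}{\QQ}_\cS=\vdual{\Grad\curl v}{\QQ}+\vdual{v}{\rot\Div\QQ}_\cT$, and applying the first pointwise identity to the first summand and the second to the second summand rewrites this as $-\vdual{\Grad\grad v}{\QQ^\perp}+\vdual{v}{\div\Div\QQ^\perp}_\cT=\dual{\traceGG{}(v)}{\QQ^\perp}_\cS$. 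The identity for $\tra{}$ is obtained by exactly the same substitution, only with the $\cT$-index swapped between the two summands.

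For the identification of spaces, since $\Grad\curl v=(\Grad\grad v)^\perp$ and $(\cdot)^\perp$ merely permutes the four scalar components up to signs, $\Grad\curl v\in\LL_2(\omega)$ is equivalent to every second-order distributional derivative of $v$ lying in $L_2(\omega)$; combined with $v\in L_2(\omega)$ and the classical characterization of $H^2$ on a bounded Lipschitz domain, this gives $v\in H^2(\omega)$ with identical graph norms. For the tensor equivalence, $\QQ^\perp$ is symmetric if and only if $\QQ$ is trace-free (both conditions read $a+d=0$), $\|\QQ\|_\omega=\|\QQ^\perp\|_\omega$ is immediate, and $\|\rot\Div\QQ\|_\omega=\|\div\Div\QQ^\perp\|_\omega$ is the second bullet point. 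The only step requiring more than a line of algebra is the Sobolev fact that $L_2$-regularity of $v$ plus $L_2$-regularity of all second distributional derivatives forces $v\in H^2(\omega)$; this is the only conceivable obstacle, but it is classical and can be cited without reproof.
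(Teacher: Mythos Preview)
Your argument is correct and is exactly the kind of entry-wise verification the paper has in mind; the paper itself offers no proof beyond the remark that it ``is straightforward.'' Your careful identification of the one nontrivial point---that $v\in L_2(\omega)$ together with $\Grad\grad v\in\LL_2(\omega)$ forces $v\in H^2(\omega)$ on a bounded Lipschitz domain---is appropriate, and citing it is all that is needed.
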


Using these relations, bilinear form \eqref{b} transforms for Stokes problem \eqref{Stokes}
into
\begin{align} \label{b_Stokes}
   b(u,\PP,\tu,\tP;v,\QQ) &=
   -\vdual{u}{\rot\Div\QQ}_\cT + \vdual{\PP}{\QQ-\Grad\curl v}_\cT
   + \dualGC{\tu}{\QQ}_\cS + \dualGCa{\tP}{v}_\cS\nonumber\\
   &=
   -\vdual{u}{\div\Div\QQ^\perp}_\cT + \vdual{\PP^\perp}{\QQ^\perp+\Grad\grad v}_\cT
   + \dual{\tu^\perp}{\QQ^\perp}_\cS + \dual{\tP^\perp}{v}_\cS\nonumber\\
   &=: b^\perp(u,\PP^\perp,\tu^\perp,\tP^\perp;v,\QQ^\perp)
\end{align}
with
\[
   \tu^\perp:=\traceGG{}(u) \quad\text{and}\quad \tP^\perp:=\traceDD{}(\PP^\perp).
\]
Switching in the following to bilinear form $b^\perp$, the appropriate spaces become
\begin{align*}
   &\UU^\perp:=L_2(\Omega)\times\LL_2^s(\Omega)\times
         \traceGG{}(H^2_0(\Omega))\times \traceDD{}(\HdDiv{\Omega}),
   &\VV^\perp:=H^2(\cT)\times\HdDiv{\cT}
\end{align*}
with respective (squared) norms
\begin{align*}
   &\|(u,\PP^\perp,\tu^\perp,\tP^\perp)\|_{\UU^\perp}^2
   := \|u\|^2 + \|\PP^\perp\|^2 + \|\tu^\perp\|_{\inf,2}^2 + \|\tP^\perp\|_{\inf,\div\Div}^2,\\
   &\|(v,\QQ^\perp)\|_{\VV^\perp}^2 := \|v\|_{2,\cT}^2 + \|\QQ^\perp\|_{\div\Div,\cT}^2.
\end{align*}
Here, $\|v\|_{2,\cT}=(\|v\|^2+\|\Grad\grad v\|_\cT^2)^{1/2}$ is the product norm in $H^2(\cT)$,
and $\|\cdot\|_{\inf,2}$, $\|\cdot\|_{\inf,\div\Div}$
are the canonical minimal extension norms in $H^2_0(\Omega)$ and $\HdDiv{\Omega}$, respectively,
with corresponding norms on $\Omega$.

For reference let us state the corresponding norm identities in the ansatz and test spaces.
The proof is immediate.

\begin{lemma} \label{la_Stokes_norms}
Using the $(\cdot)^\perp$ relation previously introduced, the following identities hold true,
\begin{align*}
   &\|(u,\PP,\tr{}(\tilde u),\tra{}(\widetilde\PP)\|_{\UU}
   = \|(u,\PP^\perp,\traceGG{}(\tilde u),\traceDD{}(\widetilde\PP^\perp)\|_{\UU^\perp}\\
   & \hspace{0.3\textwidth}\forall u\in L_2(\Omega),\ \PP\in\LL_2^\perp(\Omega),\
   \tilde u\in H^2(\Omega),\ \widetilde\PP\in\HrotDiv{\Omega},\\
   &\|(v,\QQ)\|_\VV = \|(v,\QQ^\perp)\|_{\VV^\perp} \quad\forall (v,\QQ)\in \VV.
\end{align*}
\end{lemma}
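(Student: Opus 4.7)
The plan is to reduce both identities to two elementary observations together with the translation rules already provided by Lemma~\ref{la_Stokes_tech}. First I would note that the column-wise operator $\QQ\mapsto\QQ^\perp$ is an $L_2$-isometry on tensor fields, which is immediate from its defining formula (a column permutation with a sign change). Second, Lemma~\ref{la_Stokes_tech} already supplies $H^2(\omega)=\HGradcurl{\omega}$, the bijection $\QQ\in\HrotDiv{\omega}\Leftrightarrow \QQ^\perp\in\HdDiv{\omega}$, the pointwise relation $\rot\Div\QQ=\div\Div\QQ^\perp$, and the two trace-pairing identities linking $\tr{}/\traceGG{}$ and $\tra{}/\traceDD{}$.

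For the test-space identity I would derive element-wise matches of the squared graph norms. The $L_2$-isometry gives $\|\QQ\|_T=\|\QQ^\perp\|_T$, and combining with $\rot\Div\QQ=\div\Div\QQ^\perp$ yields $\|\QQ\|_{\GCurla,T}=\|\QQ^\perp\|_{\div\Div,T}$ for every $T\in\cT$. For a scalar $v\in H^2(T)$, plugging $\QQ=\Grad\curl v$ into the first identity of Lemma~\ref{la_Stokes_tech} (or verifying componentwise that $(\Grad\curl v)^\perp=-\Grad\grad v$) gives $\|\Grad\curl v\|_T=\|\Grad\grad v\|_T$ and hence $\|v\|_{\GCurl,T}=\|v\|_{2,T}$. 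Summing the squared identities over $\cT$ delivers $\|(v,\QQ)\|_\VV=\|(v,\QQ^\perp)\|_{\VV^\perp}$.

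For the trial-space identity the $L_2$ contributions $\|u\|$ and $\|\PP\|=\|\PP^\perp\|$ agree directly. For the trace components, the third identity of Lemma~\ref{la_Stokes_tech} combined with the fact that $(\cdot)^\perp$ is a norm-preserving bijection $\HrotDiv{\cT}\to\HdDiv{\cT}$ implies the equivalence $\tr{}(v)=\tr{}(\tilde u)\Leftrightarrow \traceGG{}(v)=\traceGG{}(\tilde u)$ for $v\in H^2(\Omega)$. Hence the admissible sets defining $\|\tr{}(\tilde u)\|_{\inf}$ and $\|\traceGG{}(\tilde u)\|_{\inf,2}$ coincide, and the objectives $\|v\|_{\GCurl}=\|v\|_2$ match by the previous paragraph, so the infima agree. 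The analogous argument using the fourth identity of Lemma~\ref{la_Stokes_tech} yields $\|\tra{}(\widetilde\PP)\|_{\inf^*}=\|\traceDD{}(\widetilde\PP^\perp)\|_{\inf,\div\Div}$. Assembling the four squared contributions proves the first identity. The proof is essentially bookkeeping; the only point requiring care is tracking how the two variants of trace operators correspond under $(\cdot)^\perp$ so that the inf-sets in the trace-norm comparison genuinely coincide, but this is directly furnished by Lemma~\ref{la_Stokes_tech}.
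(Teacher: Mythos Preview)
Your proposal is correct and follows exactly the route the paper has in mind; the paper simply states ``The proof is immediate'' and omits all details, since everything reduces to the $L_2$-isometry of $(\cdot)^\perp$ together with the translation rules of Lemma~\ref{la_Stokes_tech}. Your additional care in verifying that the admissible sets in the two infimum trace norms coincide (via the bijection $\HrotDiv{\cT}\leftrightarrow\HdDiv{\cT}$ and the trace-pairing identities) is the only point that could conceivably hide a subtlety, and you handle it correctly.
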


Now, in order to introduce the DPG scheme solving \eqref{Stokes}, we provide three discretization
steps: the selection of discrete subspaces $\UU_h^\perp\subset\UU^\perp$, an approximation of
datum $\rot\ff\in H^{-1}(\Omega)$, and a discretization of trial-to-test operator
$\ttt^\perp:\;\UU^\perp_h\to\VV^\perp$. The final discrete DPG scheme is formulated
in \S\ref{sec_Stokes_DPG}. There, we also formulate the main results of this section,
Theorem~\ref{thm_DPG_Stokes} and Corollary~\ref{cor_DPG_Stokes}, stating a priori error estimates.
Some more technical results and a proof of Theorem~\ref{thm_DPG_Stokes} are given
in \S\S\ref{sec_Stokes_pf1} and~\ref{sec_Stokes_pf2}.

\subsection{Discretization space}

We employ the construction of discrete trace subspaces from \cite{FuehrerHN_19_UFK}.
We assume the mesh $\cT$ to be a regular triangulation of $\Omega$
with shape-regular elements. We will use the mesh width function $h_\cT=(h_T)_{T\in\cT}$
defined with $h_T:=\diam(T)$.
For an element $T\in\cT$, we denote by $P^k(T)$ the space of polynomials
on $T$ of total degree less than or equal to $k$, and
\[
   P^k(\cT):=\{z\in L_2(\Omega);\; z|_T\in P^k(T)\; \forall T\in\cT\}.
\]
Furthermore,
\[
   \mathbb{P}^{k,s}(T):=P^k(T)^{2\times 2}\cap\LL_2^s(T)\quad (T\in\cT),
\]
and $\mathbb{P}^{k,s}(\cT)$ is the corresponding piecewise polynomial tensor-valued
space.
For the discrete trace spaces we need to introduce polynomial spaces on edges. To this end
let $\cE_T$ denote the set of edges of $T\in\cT$ and define $\cE:=\bigcup_{T\in\cT}\cE_T$.
Then, $P^k(E)$ denotes the space of polynomials on $E\in\cE$ up to degree $k$, and
\[
   P^k(\cE_T):=\{z\in L_2(\partial T);\; z|_E\in P^k(E)\;\forall E\in\cE(T)\}\quad (T\in\cT).
\]
The definition of discrete trace spaces is done through the application of trace operators
to appropriate spaces on $\Omega$. For $T\in\cT$ we define
\begin{align*}
   \widehat U_T &:= \traceGG{T}\bigl(
          \{z\in H^2(T);\; \Delta^2 z+z=0,\;
            z|_{\partial T}\in P^3(\cE_T),\; \nn\cdot\grad z|_{\partial T}\in P^1(\cE_T)\}\bigr)
   \quad (T\in\cT),\\
   \widehat U_\cS
   &:= \{\tu=(\tu_T)_{T\in\cT}\in\traceGG{}(H^2_0(\Omega));\; \tu_T\in\widehat U_T\;\forall T\in\cT\}.
\end{align*}
Here, $\traceGG{T}$ denotes the trace operator $\traceGG{}$ associated with element $T\in\cT$
(obtained by formally replacing $\Omega$ and $\cT$ with $T$ and $\{\Omega\}$, respectively,
in the definition of $\traceGG{}$).
The degrees of freedom for the discrete space $\widehat U_\cS$
are the values of functions and their gradients at the
interior nodes, see~\cite{FuehrerHN_19_UFK} for details. That is, the dimension of
$\widehat U_\cS$ is three times the number of interior nodes.

For the definition of a discrete subspace $\widehat Q_\cS\subset \traceDD{}(\HdDiv{\Omega})$
we select local spaces element-wise,
\begin{align*}
   U_{\dDiv,T}:= &\bigl\{\MM\in\HdDiv{T};\; \Grad\grad\div\Div\MM+\MM=0,\\
      &\quad
      \bigl(\nn\cdot\Div\MM+\partial_{\bt,\cE_T}(\bt\cdot\MM\nn)\bigr)|_{\partial T}\in P^0(\cE_T),\
      \nn\cdot\MM\nn|_{\partial T}\in P^0(\cE_T)\bigr\} && (T\in\cT).
\end{align*}
Here, $\bt$ is the unit tangential vector along $\partial T$ in mathematically positive orientation,
and $\partial_{\bt,\cE_T}$ denotes the $\cE_T$-piecewise tangential derivative operator.
The corresponding global space is
\begin{align*}
   U_{\dDiv,\cT}:= \bigl(\Pi_{T\in\cT} U_{\dDiv,T}\bigr) \cap \HdDiv{\Omega}.
\end{align*}
Our second discrete trace space then is
\begin{equation*}
   \widehat Q_\cS := \traceDD{}\bigl(U_{\dDiv,\cT}\bigr).
\end{equation*}
To identify the degrees of freedom we denote by $\cN_T$ the set of vertices of $T\in\cT$,
and let $\cN_{0}$ be the set of vertices of $\cT$ that do not lie on $\Gamma$.
The degrees of freedom of $\widehat Q_\cS$ are
\begin{subequations} \label{dof_dd}
\begin{align}
   \dual{\nn\cdot\MM\nn}1_E,\quad \dual{\nn\cdot\Div\MM + \partial_{\bt}(\bt\cdot\MM\nn)}1_E\quad
   & (E\in\cE), \\
   \jump\MM_{\partial T}(\bx) \qquad & (\bx\in\cN_T, \, T\in\cT), \label{dof_dd_jump}\\
   \text{subject to } \,\,
   \sum_{T\in\cT(\bx)} \jump{\MM}_{\partial T}(\bx) = 0 \qquad & (\bx\in\cN_0). \label{dof_dd_constraint}
\end{align}
\end{subequations}
Here, term $\jump\MM_{\partial T}$ represents point distributions at vertices.
For a sufficiently smooth function $\MM$, they are given by the jumps
\begin{align*}
  \jump\MM_{\partial T}(\bx) = (\bt\cdot\MM\nn)|_E(\bx)-(\bt\cdot\MM\nn)|_{E'}(\bx)
\end{align*}
where $\bx=\overline E\cap \overline E'$ for the edges $E,E'\subset\partial T$
with $\bx$ being an endpoint of $E$ and starting point of $E'$ (seen in the direction of $\bt$).
In the constraint relation at interior nodes, $\cT(\bx)$ is the set of elements $T\in\cT$
that have node $\bx$ as a vertex.
For details on the degrees of freedom we refer to~\cite[Section~3.4]{FuehrerHN_19_UFK}.
The dimension of $\widehat Q_\cS$ is $2\#\cE+3\#\cT-\#\cN_0$.

Finally, our approximation space is defined as
\begin{equation*}
  \UU^\perp_h := P^1(\cT)\times \mathbb{P}^{0,s}(\cT)\times\widehat U_\cS\times \widehat Q_\cS.
\end{equation*}

\subsection{Data approximation}

Considering Stokes problem \eqref{Stokes}, the linear functional $L$ from variational formulation
\eqref{VF} becomes $L(v,\QQ)=\vdual{\rot\ff}{v}$ and is not well defined for $v\in H^2(\cT)$.
We circumvent this problem by approximating $\rot\ff\in H^{-1}(\Omega)$ by an $L_2(\Omega)$-load.
We do this by using the operator $P_h':\;H^{-1}(\Omega)\to P^1(\cT)$ from \cite{Fuehrer_21_MDN}
and repeating the procedure and analysis from \cite{FuehrerHK_22_MSO}.

Let us give some details. We start with the Scott--Zhang-type quasi-interpolation operator
$J_h:\;L_2(\Omega)\to P^1(\cT)\cap H^1_0(\Omega)$ defined by
\[
   J_hv=\sum_{\bx\in\cN_0} \vdual{v}{\psi_{\bx}}\eta_{\bx}.
\]
Here, $\eta_{\bx}\in P^1(\cT)\cap H^1_0(\Omega)$ is the nodal basis function associated with
$\bx$, normed by $\|\eta_{\bx}\|_\infty=1$, and $\psi_{\bx}\in P^1(\cT)$ is bi-orthogonal to $\eta_{\bx}$
with support on the closure of the subdomain $\Omega(\bx)\subset\Omega$
generated by the elements surrounding vertex $\bx$,
\(
   \overline\Omega(\bx):=\cup\{\overline T\in\cT;\; \bx\in\cN_T\}.
\)
Furthermore, we introduce an operator $B_h:\;L_2(\Omega)\to L_2(\Omega)$ by
\[
   B_hv :=\sum_{T\in\cT} \vdual{v}{\chi_T}_T \eta_{b,T}
\]
with characteristic function $\chi_T$ on $T$ and element bubble function
$\eta_{b,T}\in P^3(T)\cap H^1_0(T)$ ($T\in\cT$), normalized as
$\vdual{\eta_{b,T}}1_T=1$ and extended by $0$ to $\Omega$. Finally we define operator
$P_h':\;H^{-1}(\Omega)\to P^1(\cT)$ as the adjoint of $J_h(\cdot)+B_h(1-J_h)(\cdot)$,
\[
   P_h'\phi:=J_h'\phi+(1-J_h')B_h'\phi\quad\text{with}\quad
   J_h'\phi=\sum_{\bx\in\cN_0}\vdual{\phi}{\eta_{\bx}}\psi_{\bx},\quad
   B_h'\phi=\sum_{T\in\cT}\vdual{\phi}{\eta_{b,T}}\chi_T.
\]
For further details and properties we refer to \cite{Fuehrer_21_MDN,FuehrerHK_22_MSO}.

We use operator $P_h'$ to regularize the $H^{-1}(\Omega)$-functional of the Stokes problem,
\begin{equation*}
   L_h(v,\QQ^\perp):=\vdual{P_h'\rot\ff}{v}\quad\forall (v,\QQ^\perp)\in\VV^\perp.
\end{equation*}

\subsection{Fully discrete scheme and a priori error estimate} \label{sec_Stokes_DPG}

In practice, trial-to-test operator $\ttt:\;\UU\to\VV$ has to be approximated. The standard
procedure is to replace $\VV$ with a piecewise polynomial space $\VV_h$ of polynomial degrees
that are sufficiently large. Critical is the existence of a Fortin operator
$\Pi_F:\;\VV\to\VV_h$ which is uniformly bounded, and such that
$b(\ww,(1-\Pi_F)\vv)=0$ for any $\ww\in\UU_h$ and $\vv\in\VV$, cf.~\cite{GopalakrishnanQ_14_APD}.
Considering the transformed bilinear form $b^\perp$, we can use the construction
from \cite{FuehrerH_19_FDD} for the Kirchhoff--Love model problem, see Section~2.5 there.
This leads us to selecting
\begin{equation*}
   \VV_h^\perp:=P^3(\cT)\times\mathbb{P}^{4,s}(\cT)
\end{equation*}
and defining $\ttt_h^\perp:\;\UU_h^\perp\to \VV_h^\perp$ by
\[
   \ip{\ttt_h^\perp(\ww^\perp)}{\dvv^\perp}_{\VV^\perp} = b^\perp(\ww^\perp,\dvv^\perp)
   \quad\forall \ww^\perp\in\UU_h^\perp,\ \dvv^\perp\in\VV_h^\perp.
\]
Here, $\ip{\cdot}{\cdot}_{\VV^\perp}$ is the inner product in $\VV^\perp$ that gives rise
to norm $\|\cdot\|_{\VV^\perp}$ defined before.
With this preparation, our fully discrete DPG scheme for Stokes problem \eqref{Stokes} reads as
\begin{equation} \label{DPG_Stokes}
   \uu^\perp_h:=(u_h,\PP^\perp_h,\tu^\perp_h,\tP^\perp_h)\in\UU^\perp_h:\quad
   b^\perp(\uu^\perp_h,\dvv^\perp) = L_h(\dvv^\perp)
   \quad\forall \dvv^\perp\in\ttt_h^\perp(\UU_h^\perp).
\end{equation}
In order to specify the approximation order of the lowest-order DPG scheme,
let us introduce the regularity shift $s=s(\Omega)$ of the Laplacian:
\begin{equation} \label{shift}
   s\in (0,1]:\; (-\Delta)^{-1}:\; L_2(\Omega)\to H^{1+s}(\Omega)\cap H^1_0(\Omega).
\end{equation}
It goes without saying that we are interested in $s$ as large as possible. Though a maximum
value need not exist.
Furthermore, we make the assumption that the solution $u$ of \eqref{Stokes} satisfies
the regularity property
\begin{equation} \label{reg_Stokes}
   \|u\|_3\lesssim \|\rot\ff\|_{-1}.
\end{equation}
It follows from the more general assumption that given $f\in H^{-1}(\Omega)$ the solution of the bi-Poisson problem
  \begin{align*}
    \Delta^2 w = f \text{ in }\Omega, \quad w=0=\partial_{\bn}w \text{ on }\Gamma,
  \end{align*}
  satisfies the regularity property
  \begin{align}\label{reg_biharmonic}
    \|w\|_3\lesssim \|f\|_{-1}.
  \end{align}
This is certainly true for a convex domain $\Omega$, cf.~\cite[Theorem~2]{BlumR_80_BVP}.
Of course, in the case of a convex domain, the maximum regularity shift \eqref{shift} of the Laplacian
is $s=1$. However, some of the technical results we prove below apply to non-convex domains
and are relevant for plate bending problems. Therefore, we keep relations \eqref{shift}
and \eqref{reg_Stokes} independent.

\begin{theorem} \label{thm_DPG_Stokes}
We consider a regular polygonal domain $\Omega\subset\R^2$.
Let $\ff\in \bL_2(\Omega)$ be given and let $u$ be the solution of \eqref{Stokes}.
We assume that \eqref{reg_Stokes} holds, set $\PP:=-\Grad\curl u$, $\tu:=\tr{}(u)$,
and recall that $s>0$ is the regularity shift \eqref{shift}.

(i) Scheme \eqref{DPG_Stokes} has a unique solution $\uu_h^\perp\in\UU_h^\perp$
with corresponding function $\uu_h=(u_h,\PP_h,\tu_h,\tP_h)\in\UU_h$. It satisfies
\begin{align*}
    \|u-u_h\| + \|\PP-\PP_h\| + \|\tu-\tu_h\|_{\inf}
    \lesssim \|h_\cT\|_\infty^s \|\ff\|
\end{align*}
with a hidden constant that is independent of $\cT$.

(ii) Assume additionally that $\rot\ff\in L_2(\Omega)$.
Then, the solution component $u_h$ satisfies the improved error estimate
  \begin{align*}
    \|u-u_h\| \lesssim \|h_\cT\|_\infty^{1+s} (\|\ff\|+\|\rot\ff\|)
  \end{align*}
with a hidden constant that is independent of $\cT$.
\end{theorem}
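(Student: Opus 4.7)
The plan is to combine the DPG paradigm with a Strang-type perturbation that accounts for the data regularization $L\mapsto L_h$. By Lemma~\ref{la_Stokes_norms} and the identity \eqref{b_Stokes}, the transformed bilinear form $b^\perp$ is exactly of the type analyzed in \cite{FuehrerHN_19_UFK,FuehrerH_19_FDD}, so the Fortin operator $\Pi_F:\;\VV^\perp\to\VV_h^\perp$ constructed there applies here, giving uniform discrete inf-sup stability of \eqref{DPG_Stokes}. Together with the boundedness of $b^\perp$ this yields the Strang-type estimate
\[
   \|\uu^\perp-\uu_h^\perp\|_{\UU^\perp}
   \lesssim \inf_{\ww_h^\perp\in\UU_h^\perp}\|\uu^\perp-\ww_h^\perp\|_{\UU^\perp}
   + \sup_{0\neq\dvv^\perp\in\VV^\perp}\frac{|L(\dvv^\perp)-L_h(\dvv^\perp)|}{\|\dvv^\perp\|_{\VV^\perp}}.
\]
Lemma~\ref{la_Stokes_norms} then identifies the left-hand side with the quantities appearing in the theorem.

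For part~(i) I would bound the two right-hand contributions separately. Under \eqref{reg_Stokes} we have $u\in H^3(\Omega)$ and $\PP=-\Grad\curl u\in H^1(\Omega)^{2\times 2}$, so piecewise linear interpolation of $u$, $L_2$-projection of $\PP^\perp$ onto $\mathbb{P}^{0,s}(\cT)$, and the trace interpolants into $\widehat U_\cS$, $\widehat Q_\cS$ from \cite{FuehrerHN_19_UFK} furnish a $\ww_h^\perp\in\UU_h^\perp$ with best-approximation error $\lesssim\|h_\cT\|_\infty\|\ff\|$, which is higher order than $\|h_\cT\|_\infty^s$. The data-approximation term $L-L_h=\vdual{(1-P_h')\rot\ff}{\cdot}$ is treated as in \cite{FuehrerHK_22_MSO}: the Scott--Zhang/bubble construction of $P_h'$ provides orthogonality against $P^1(\cT)\cap H^1_0(\Omega)$ and against piecewise constants, so pairing $(1-P_h')\rot\ff$ with a broken test function $v\in H^2(\cT)$ reduces, after splitting off a suitable conforming piece, to an auxiliary Poisson problem where the regularity shift \eqref{shift} can be invoked. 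This produces $|\vdual{(1-P_h')\rot\ff}{v}|\lesssim \|h_\cT\|_\infty^s\|\rot\ff\|_{-1}\|v\|_{2,\cT}$, which is the slower of the two bounds and fixes the stated rate.

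Part~(ii) is proved by an Aubin--Nitsche duality argument. For $\phi\in L_2(\Omega)$ I would let $z\in H^2_0(\Omega)$ solve $\Delta^2 z=\phi$; assumption \eqref{reg_biharmonic} together with the shift \eqref{shift} gives $z$ extra smoothness beyond $H^3(\Omega)$. Rewriting $\vdual{u-u_h}{\phi}$ via the continuous variational formulation, inserting the DPG-quadruple associated with $z$ minus a discrete quasi-interpolant of it, and using Galerkin orthogonality (adjusted for the data perturbation) gains one extra factor $\|h_\cT\|_\infty$ relative to part~(i). Under the additional hypothesis $\rot\ff\in L_2(\Omega)$, the data term can be tested in a stronger norm: bubble-orthogonality of $P_h'$ then yields $|\vdual{(1-P_h')\rot\ff}{z}|\lesssim \|h_\cT\|_\infty^{1+s}\|\rot\ff\|\|\phi\|$, and combining these two improvements delivers the claimed rate $\|h_\cT\|_\infty^{1+s}$.

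The main obstacle is bounding $|L(\dvv^\perp)-L_h(\dvv^\perp)|$ on the broken test space $\VV^\perp=H^2(\cT)\times\HdDiv{\cT}$, whose $v$-component need not lie in $H^1_0(\Omega)$ nor be continuous across element interfaces, so the Scott--Zhang part $J_h$ of $P_h'$ cannot be applied to $v$ directly. The required argument, which follows \cite{FuehrerHK_22_MSO}, consists in separating a conforming $H^1_0$-piece of $v$ (on which the regularity shift \eqref{shift} of the Laplacian is applied) from a jump/bubble remainder that is annihilated by the $B_h$-part of $P_h'$ and by piecewise zero-mean orthogonality; the duality step in part~(ii) requires the same delicate decomposition at one additional order in $h$.
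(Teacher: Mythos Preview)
Your Strang-type setup for part~(i) has a genuine gap. The functional $L(\dvv^\perp)=\vdual{\rot\ff}{v}$ is not defined on the broken test space $\VV^\perp=H^2(\cT)\times\HdDiv{\cT}$ (the paper states this explicitly), and---more importantly---the exact quadruple $\uu^\perp$ is \emph{not} an element of $\UU^\perp$: the component $\tP^\perp=\traceDD{}(\PP^\perp)$ requires $\PP^\perp\in\HdDiv{\Omega}$, i.e.\ $\div\Div\PP^\perp\in L_2(\Omega)$, but $\div\Div\PP^\perp=\rot\ff$ lies only in $H^{-1}(\Omega)$ under the hypotheses of part~(i). Hence neither side of your Strang estimate is well defined, and your best-approximation claim also fails: the trace-interpolant into $\widehat Q_\cS$ (Proposition~\ref{prop_tP_approx} or \cite[Lemma~6.4]{FuehrerHN_19_UFK}) needs $\|\div\Div\PP^\perp\|$ finite, which it is not. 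This is precisely why the theorem omits the $\tP$-component from the error bound.

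The paper circumvents this by never writing a Strang estimate for $\uu^\perp$. Instead it introduces the auxiliary solution $\tilde u$ of \eqref{Stokes} with regularized datum $P_h'\rot\ff\in L_2(\Omega)$; for this problem the full quadruple $\tilde\uu^\perp\in\UU^\perp$, and Proposition~\ref{prop_DPG_Stokes_L2} gives the DPG quasi-optimality bound $\|\tilde\uu-\uu_h\|_\UU\lesssim h\|\tilde u\|_3+h^{1+s}\|P_h'\rot\ff\|$. The $h^s$ rate then arises not from a consistency term but from the inverse estimate $h^{1+s}\|P_h'\rot\ff\|\lesssim h^{s}\|P_h'\rot\ff\|_{-1}$, while $\|u-\tilde u\|_2\lesssim\|(1-P_h')\rot\ff\|_{-2}\lesssim h\|\ff\|$ handles the remaining piece. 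For part~(ii) your Aubin--Nitsche intuition is correct in spirit, but the paper again works with the auxiliary solution $u_\mathrm{a}=\tilde u$ (not $u$) in the duality, and needs an explicit inversion of the trial-to-test operator (Proposition~\ref{prop_dual}) together with the mixed DPG formulation (Proposition~\ref{prop_dual2}); the data contribution is controlled via $\|u-u_\mathrm{a}\|\lesssim\|(1-P_h')\rot\ff\|_{-2}\lesssim h^2\|\rot\ff\|$, which is where the additional hypothesis $\rot\ff\in L_2(\Omega)$ enters, rather than through the broken-test pairing you sketch.
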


The proof of these statements is quite technical and requires some preparation.
For part (i) this is done in \S\ref{sec_Stokes_pf1}, and for part (ii) in \S\ref{sec_Stokes_pf2}.
Before giving a proof, let us note that Theorem~\ref{thm_DPG_Stokes}(ii) implies
an error estimate for the approximation of the velocity field $\bu=\curl u$ by $\curl_\cT u_h$.
A proof is direct and therefore skipped.

\begin{cor}\label{cor_DPG_Stokes}
  Under the assumptions and notation of Theorem~\ref{thm_DPG_Stokes}(ii),
  the discrete velocity field $\bu_h:=\curl_\cT u_h$ satisfies
  \begin{align*}
    \|\bu-\bu_h\| \lesssim \|h_\cT\|_\infty^{s} (\|\ff\|+\|\rot\ff\|)
  \end{align*}
with a hidden constant that is independent of $\cT$.
\end{cor}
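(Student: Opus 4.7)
The plan is to reduce the velocity error to a piecewise $H^1$-type error on the scalar unknown and then combine the improved $L_2$-estimate of Theorem~\ref{thm_DPG_Stokes}(ii) with a standard inverse inequality.

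First, in two dimensions $\curl w=(\partial_2 w,-\partial_1 w)^\top$ for a scalar function $w$, so $|\curl w|=|\grad w|$ pointwise; applied piecewise this yields
\[
   \|\bu-\bu_h\|=\|\curl_\cT(u-u_h)\|=\|\pwgrad(u-u_h)\|.
\]
Next I would introduce a Scott--Zhang-type quasi-interpolant $I_h u\in P^1(\cT)\cap H^1_0(\Omega)$ with the standard bounds
\[
   \|u-I_h u\|\lesssim\|h_\cT\|_\infty^2\|u\|_2,\qquad
   \|\grad(u-I_h u)\|\lesssim\|h_\cT\|_\infty\|u\|_2,
\]
and split via the triangle inequality,
\[
   \|\pwgrad(u-u_h)\|\le\|\grad(u-I_h u)\|+\|\pwgrad(I_h u-u_h)\|.
\]

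The first term is $O(\|h_\cT\|_\infty\|u\|_2)$ directly. For the second, since $I_h u-u_h\in P^1(\cT)$, the piecewise inverse estimate on shape-regular elements gives $\|\pwgrad(I_h u-u_h)\|\lesssim\|h_\cT\|_\infty^{-1}\|I_h u-u_h\|$; combined with the triangle inequality, the interpolation bound, and Theorem~\ref{thm_DPG_Stokes}(ii) this yields
\[
   \|\pwgrad(I_h u-u_h)\|\lesssim \|h_\cT\|_\infty^{-1}\bigl(\|h_\cT\|_\infty^2\|u\|_2+\|h_\cT\|_\infty^{1+s}(\|\ff\|+\|\rot\ff\|)\bigr).
\]
Using \eqref{reg_Stokes} to bound $\|u\|_2\le\|u\|_3\lesssim\|\rot\ff\|_{-1}\le\|\ff\|$, everything collapses to $O(\|h_\cT\|_\infty^s(\|\ff\|+\|\rot\ff\|))$ after absorbing the $\|h_\cT\|_\infty$ contribution into $\|h_\cT\|_\infty^s$ on meshes of bounded diameter.

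The mechanism is that the improved $L_2$-estimate of Theorem~\ref{thm_DPG_Stokes}(ii) delivers exactly the extra factor $\|h_\cT\|_\infty$ needed to offset the one lost to the inverse inequality, yielding the claimed order $s$. I do not expect any real obstacle here, which is consistent with the paper's remark that the proof is direct and therefore skipped.
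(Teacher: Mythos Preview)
Your argument is correct and is precisely the ``direct'' proof the paper alludes to but skips: pass from the velocity error to the piecewise gradient error via $|\curl w|=|\grad w|$, insert a $P^1$ interpolant, and apply the inverse inequality so that the extra power of $\|h_\cT\|_\infty$ from Theorem~\ref{thm_DPG_Stokes}(ii) exactly offsets the factor lost. One small caveat: the inverse estimate as you write it, with $\|h_\cT\|_\infty^{-1}$ rather than $(\min_T h_T)^{-1}$, tacitly assumes quasi-uniformity of $\cT$; this is, however, consistent with the paper's own use of inverse estimates in the proof of Theorem~\ref{thm_DPG_Stokes}(i).
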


\subsection{Approximation results and proof of Theorem~\ref{thm_DPG_Stokes}(i)} \label{sec_Stokes_pf1}

For ease of presentation we abbreviate $h:=\|h_\cT\|_\infty$ in the proofs of this
and the following section.

We start by proving a proposition that improves the trace approximation result from
\cite[Lemma~6.4]{FuehrerHN_19_UFK}.
We need the lowest-order Raviart--Thomas interpolation operator associated with
mesh $\cT$, $\Pi^0_{\div}$, and the corresponding operator $\Pi^0_{\Div}$ applied row-wise to tensors.
Furthermore, we employ the $L_2$-projector $\Pi^0$ onto piecewise constants. It is generically used for
scalar and vector functions. We will use the properties
$\Div\Pi^0_{\div}\MM=\Pi^0\Div\MM$ and $\|\MM-\Pi^0_{\div}\MM\|\lesssim h\|\MM\|_1$
for $\MM\in\HH^1(\Omega)$, and will use this notation and the mentioned relations also element-wise. 

\begin{prop} \label{prop_tP_approx}
Let $\Omega$ be a regular polygonal domain with regularity shift $s$, cf.~\eqref{shift}.
Given $\MM\in\HdDiv{\Omega}\cap \HH^1(\Omega)$, estimate
\begin{align*}
   \min_{\tq_h \in \widehat Q_\cS}
   \|\traceDD{}(\MM) - \tq_h\|_{\inf,\div\Div}
   \lesssim \|h_\cT\|_\infty \|\MM\|_1 + \|h_\cT\|_\infty^{1+s} \|\div\Div\MM\|
\end{align*}
holds true with a hidden constant that only depends on the shape-regularity of $\cT$.
\end{prop}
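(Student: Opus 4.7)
The strategy is to construct an explicit $\MM_h\in U_{\dDiv,\cT}$ whose trace $\tq_h:=\traceDD{}(\MM_h)\in\widehat Q_\cS$ realizes the claimed approximation order. Since $\MM_h\in\HdDiv{\Omega}$ is itself an admissible lift of $\traceDD{}(\MM)-\tq_h$, the minimal-extension characterization reduces the proof to controlling
\[
   \|\MM-\MM_h\|_{\div\Div}^2 = \|\MM-\MM_h\|^2 + \|\div\Div(\MM-\MM_h)\|^2.
\]

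The construction of $\MM_h$ follows the Fortin-type recipe from \cite[Section~6]{FuehrerHN_19_UFK}: on every edge $E\in\cE$ I match the two moments $\dual{\nn\cdot\MM\nn}{1}_E$ and $\dual{\nn\cdot\Div\MM+\partial_{\bt}(\bt\cdot\MM\nn)}{1}_E$, together with the point distributions $\jump{\MM}_{\partial T}(\bx)$ at each vertex $\bx\in\cN_T$. The constraint \eqref{dof_dd_constraint} holds automatically since $\MM\in\HdDiv{\Omega}\cap\HH^1(\Omega)$ carries no singular contributions at interior vertices. The resulting local piece $\MM_h|_T=-\Grad\grad g_T$, with $g_T$ solving $\Delta^2 g_T+g_T=0$ on $T$ subject to the prescribed data, is well defined by the unisolvence established in the cited reference. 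A Bramble--Hilbert and scaling argument on the reference element then yields the elementwise bound $\|\MM-\MM_h\|_T\lesssim h_T\|\MM\|_{1,T}+h_T\|\div\Div\MM\|_T$, which after summation delivers the $\|h_\cT\|_\infty\|\MM\|_1$ term of the claim as well as a first, suboptimal contribution $\|h_\cT\|_\infty\|\div\Div\MM\|$ that still needs to be sharpened.

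The essentially new part, and the main technical obstacle, is to upgrade the global error $\|\div\Div(\MM-\MM_h)\|$ to the optimal order $\|h_\cT\|_\infty^{1+s}\|\div\Div\MM\|$. My plan is a duality argument: for arbitrary $\phi\in L_2(\Omega)$ let $w\in H^2_0(\Omega)$ solve $\Delta^2 w=\phi$, which by factoring $\Delta^2=\Delta\circ\Delta$ and invoking \eqref{shift} on the outer Laplace step satisfies $w\in H^{2+s}(\Omega)$ with $\|w\|_{2+s}\lesssim\|\phi\|$. Element-wise integration by parts then yields
\[
   \vdual{\div\Div(\MM-\MM_h)}{\phi} = \vdual{\MM-\MM_h}{\Grad\grad w}_\cT + (\text{boundary contributions on }\cS).
\]
The crux is that the degrees of freedom built into $\MM_h$ are designed in $L_2$-duality with the edge moments $\dual{\cdot}{1}_E$ and with the vertex functionals dual to $\jump{\cdot}_{\partial T}(\bx)$, so the boundary contributions cancel once $\Grad\grad w$ is replaced on each $\partial T$ by a piecewise-constant projection reproducing those dual functionals. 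What remains is a bulk term $\vdual{\MM-\MM_h}{\Grad\grad w-\Pi^0\Grad\grad w}_\cT$, which I bound using the elementwise $L_2$-estimate above combined with the standard $H^s$-approximation estimate $\|\Grad\grad w-\Pi^0\Grad\grad w\|\lesssim\|h_\cT\|_\infty^s\|w\|_{2+s}$. Taking the supremum over $\phi\in L_2(\Omega)$ closes the estimate. The advantage over \cite[Lemma~6.4]{FuehrerHN_19_UFK} is precisely this two-step Aubin--Nitsche duality, which trades the previous $\HH^2$-regularity hypothesis on $\MM$ for the natural $\HH^1\cap\HdDiv{\cdot}$ regularity assumed here.
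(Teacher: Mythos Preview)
Your proposal has a genuine gap in the duality step. You aim to bound $\|\div\Div(\MM-\MM_h)\|$ by testing with $\phi\in L_2(\Omega)$ and writing
\[
   \vdual{\div\Div(\MM-\MM_h)}{\phi}
   = \vdual{\MM-\MM_h}{\Grad\grad w}_\cT + (\text{boundary terms}),
\]
where $\Delta^2 w=\phi$. This identity is not correct: element-wise integration by parts of $\vdual{\div\Div(\MM-\MM_h)}{\phi}$ produces $\Grad\grad\phi$, not $\Grad\grad w$, and $\phi$ is merely $L_2$. If instead you test with $w$ (so the identity becomes valid), the left-hand side is $\vdual{\div\Div(\MM-\MM_h)}{w}$, and taking the supremum over $\|\phi\|=1$ then controls only a \emph{negative} norm of $\div\Div(\MM-\MM_h)$, not its $L_2$ norm. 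This is the standard Aubin--Nitsche obstruction: duality gains powers of $h$ in weaker norms, never in the norm you start from. Since the trace norm you ultimately need is bounded above by $\|\MM-\MM_h\|_{\div\Div}$, and this graph norm contains the full $\|\div\Div(\MM-\MM_h)\|$, your route cannot reach the exponent $1+s$. A secondary issue is that the claimed regularity $w\in H^{2+s}(\Omega)$ does not follow from ``factoring $\Delta^2=\Delta\circ\Delta$'': the intermediate function $\Delta w$ is not a Dirichlet--Laplace solution, so the shift \eqref{shift} does not apply.

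The paper avoids these obstacles by \emph{not} lifting to an $\MM_h\in U_{\dDiv,\cT}$ at all. It works directly with the trace-duality representation $\|\tq\|_{\inf,\div\Div}=\sup_{z}\dual{\tq}{z}_\cS/\|z\|_{2,\cT}$ and constructs $\tq_h$ by specifying its degrees of freedom edge-wise. The key idea is the Rafetseder--Zulehner decomposition $\MM=\eta\II+\scurl\eeta$ with $\eta\in H^1_0(\Omega)$ solving $\Delta\eta=\div\Div\MM$: \emph{this} is where the Laplace shift \eqref{shift} enters, giving $\eta\in H^{1+s}(\Omega)$ and hence the $h^{1+s}\|\div\Div\MM\|$ contribution. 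The second piece $\scurl\eeta$ satisfies $\div\Div(\scurl\eeta)=0$, so only the $\HH^1$-regularity of $\MM$ is needed to handle it. In short, the regularity gain is extracted from a scalar Poisson problem hidden inside $\MM$, not from a bi-Laplace dual problem.
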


\begin{proof}
We will use that $\|\tq\|_{\inf,\div\Div}=\sup_{\|z\|_{2,\cT}=1}\dual{\tq}{z}_\cS$
by \cite[Proposition~3.5]{FuehrerHN_19_UFK}.
Let $\MM\in\HdDiv{\Omega}\cap\HH^1(\Omega)$ be given and denote $\tq:=\traceDD{}(\MM)$.
By construction of $\widehat Q_\cS$, it is sufficient to prove the existence of
$\tq_h \in \traceDD{}(U_{\dDiv,\cT})$ such that 
\begin{align*}
    |\dual{\tq-\tq_h}{z}_{\cS}|
   \lesssim \bigl(h \|\MM\|_1 + h^{1+s} \|\div\Div\MM\|\bigr) \|z\|_{2,\cT}
   \quad\forall z\in H^2(\cT).
\end{align*}
We split the proof into three steps.
  
\textbf{Step 1: decomposition of $\MM$.}
We use the decomposition of $\HdDiv{\Omega}$-tensor fields
from~\cite[Theorem~4.2]{RafetsederZ_18_DRK}:
there exist $\eta\in H_0^1(\Omega)$, $\eeta\in H^1(\Omega)^2$ such that
\begin{align*}
    \MM = \MM_1 + \MM_2 := \eta\II + \scurl \eeta
\end{align*}
where $\Delta \eta = \div\Div\MM \in L^2(\Omega)$, and
$\scurl\eeta = \big(\curl\eeta+(\curl\eeta)^\top\big)/2$ is the symmetrized $\curl$,
the latter being defined component-wise for vector functions as
$\curl\begin{pmatrix}\eta_1\\ \eta_2\end{pmatrix}
:=\begin{pmatrix} \curl\eta_1^\top\\ \curl\eta_2^\top\end{pmatrix}$.
Note that $\Div\MM_1=\grad\eta\in\bL_2(\Omega)$, $\Div\MM_2=\Div(\MM-\MM_1)\in\bL_2(\Omega)$,
$\div\Div\MM_1=\div\Div\MM\in L_2(\Omega)$, and $\div\Div\MM_2=0$, so that
$\MM_1$, $\MM_2\in\HdDiv{\Omega}\cap\HDiv{\Omega}$ and $\MM_2\in\HH^1(\Omega)$ with
\begin{align*}
   &\|\Div\MM_1\| = \|\grad \eta\| = \|\div\Div\MM\|_{-1} \lesssim \|\Div\MM\|, &&
   \|\div\Div\MM_1\| = \|\div\Div\MM\|,\\
   &\|\Div\MM_2\| = \|\Div\MM-\grad\eta\| \lesssim \|\Div\MM\|, &&
   \|\div\Div\MM_2\| = 0,\quad \|\MM_2\|_1 \lesssim \|\MM\|_1.
\end{align*}
Here, we used the boundedness of $\div:\;\bL_2(\Omega)\to H^{-1}(\Omega)$ and
relation $\|\grad\eta\|=\|\div\Div\MM\|_{-1}$.
Furthermore, by definition of $s$, $\eta\in H^{1+s}(\Omega)$ with
corresponding bound
\[
   \|\eta\|_{1+s}\lesssim \|\div\Div\MM\|.
\]
Now, trace $\tq:=\traceDD{}(\MM)$ has the induced decomposition
\[
   \tq=\tq_1+\tq_2:= \traceDD{}(\MM_1)+\traceDD{}(\MM_2),
\]
and in what follows we construct corresponding approximations $\tq_{h,1}$, $\tq_{h,2}\in\widehat Q_\cS$
and choose $\tq_h:=\tq_{h,1}+\tq_{h,2}$ as the approximation of $\tq$.
Since $\MM_j\in\HdDiv{\Omega}\cap\HH^1(\Omega)$, traces $\tq_j$ have canonical components
\begin{align} \label{tq_0}
   &\dual{\tq_j}{z}_{\cS} = \dual{\traceDD{}(\MM_j)}{z}_{\cS}
   = \sum_{T\in\cT} \dual{\nn\cdot\Div\MM_j}{z}_{L_2(\partial T)}
                  - \dual{\MM_j\nn}{\grad z}_{L_2(\partial T)}
\end{align}
for $z\in H^2(\cT)$, $j=1,2$, cf.~\cite[Remark~3.1]{FuehrerHN_19_UFK}.
Here, $\dual{\cdot}{\cdot}_{L_2(\partial T)}$ denotes the appropriate duality pairing
on $\partial T$ with $L_2(\partial T)$ as pivot space, cf.~Remark~\ref{rem_trace}.
In order to construct the trace approximations we need to localize the trace components
with respect to edges. We note that $\MM_2$ is not sufficiently regular to do this directly,
whereas $\MM_1$ does have the required regularity.

\textbf{Step 2 (construction of $\tq_{h,1}$).}
We argue as in the proof of~\cite[Lemma~6.4]{FuehrerHN_19_UFK}, and localize \eqref{tq_0} as follows,
\begin{align}
   &\dual{\tq_1}{z}_{\partial T} \nonumber\\
   &= \sum_{E\in\cE_T}
   \Big(\dual{\nn\cdot\Div\MM_1+\partial_{\bt}(\bt\cdot\MM_1\nn)}{z}_E
    - \dual{\nn\cdot\MM_1\nn}{\nn\cdot\grad z}_E\Bigr)
    -\sum_{\bx\in \cN_T} \jump{\MM_1}_{\partial T}(\bx)z(\bx) \label{tq1a}\\
   &= \sum_{E\in\cE_T}
   \Big(\dual{\nn\cdot\grad\eta}{z}_E
    - \dual{\eta}{\nn\cdot\grad z}_E\Bigr) \quad\forall z\in H^2(T),\ T\in\cT.
\label{tq1}
\end{align}
Here, $\partial_{\bt}(\cdot)$ is the edge-wise tangential derivative,
and occasionally we will write $\nn\cdot\grad z=\partial_{\nn} z$.
Indeed, all the traces on $E\in\cE_T$ are $L_2(E)$-regular.
In this case, we used that $\bt\cdot\MM_1\nn=\bt\cdot\II\nn\eta=0$
on $E\in\cE_T$ so that $\partial_{\bt}(\bt\cdot\MM_1\nn)|_E=0$ and $\jump{\MM_1}_{\partial T}(\bx)=0$
for every edge $E$ and vertex $\bx$ of $T$.

Now, since $\partial_{\nn}\eta|_E\in L^2(E)$ for every edge,
there exist antiderivatives $g_E\in H^1(E)$ with
$\partial_{\bt} g_E|_E = \partial_{\nn}\eta|_E$ ($E\in\cE$).
For $T\in\cT$ we define $g_{\cE_T}^0\in H^1(\cE_T)$
(edge-wise $H^1$-functions) as $g_{\cE_T}^0|_E = g_{E}$ $\forall E\in\cE_T$.
We then choose the following degrees of freedom~\eqref{dof_dd} for $\tq_{h,1}$,
\begin{align*}
   \dual{\Pi^0_E\eta}1_E,\quad \dual{\partial_{\bt} \Pi_E^1 g_E}1_E
   &\quad (E\in\cE),\quad
   \jump{(1-\Pi_{\cE_T}^1) g_{\cE_T}^0}_{\partial T}(\bx) \quad(\bx\in\cN_T,\ T\in\cT).
\end{align*}
Here, $\Pi_E^k:\; L^2(E)\to P^k(E)$ denotes the $L^2(E)$-projector,
$\Pi_{\cE_T}^1:\; L^2(\partial T)\to P^1(\cE_T)$ is defined as
$\Pi_{\cE_T}^1 u|_E = \Pi_E^1 (u|_E)$ for $E\in\cE$, and 
$\jump{\cdot}_{\partial T}(\bx)$ is used as the jump at $\bx$ also for scalar functions
with the sign convention as in \eqref{dof_dd_jump}.
By construction, constraint \eqref{dof_dd_constraint} is satisfied.

Let $T\in\cT$ be given. Recalling relation \eqref{tq1} and integrating by parts on edges,
we calculate for $z\in H^2(T)$
\begin{align}
   &\dual{\tq_1-\tq_{h,1}}{z}_{\partial T}
   =
   \sum_{E\in\cE_T}
   \Big(\dual{\partial_{\bt}g_E}{z}_E - \dual{\eta}{\nn\cdot\grad z}_E\Bigr) \nonumber\\
   &\hspace{7em}
   - \sum_{E\in\cE_T}
   \bigl(\dual{\partial_{\bt}\Pi^1_E g_E}{z}_E - \dual{\Pi^0_E\eta}{\nn\cdot\grad z}_E\bigr)
   + \sum_{\bx\in\cN_T} \jump{(1-\Pi_{\cE_T}^{1}) g_{\cE_T}}_{\partial T}(\bx)z(\bx) \nonumber\\
   &=
   - \sum_{E\in\cE_T}
   \bigl(\dual{(1-\Pi^1_E) g_E}{\partial_{\bt}z}_E + \dual{(1-\Pi^0_E)\eta}{\nn\cdot\grad z}_E\bigr).
   \label{term2}
\end{align}
The last term in \eqref{term2} is estimated as
\begin{align} \label{term2b}
   |\dual{(1-\Pi_E^0)\eta}{\partial_{\nn} z}_E|
   = |\dual{(1-\Pi_E^0)\eta}{(1-\Pi_E^0)\partial_{\nn} z}_E| \lesssim h_T \|\eta\|_{1,T}\|z\|_{2,T},
\end{align}
and the first term in \eqref{term2} is bounded as
\begin{align} \label{term2c}
   |\dual{(1-\Pi_E^1)g_E}{\partial_{\bt} z}_E|
   &= |\dual{(1-\Pi_E^1)g_E}{(1-\Pi_E^1)\partial_{\bt} z}_E|
   \lesssim h_T\|(1-\Pi_E^0)\partial_{\bt} g_E\|_E h_T^{1/2}\|z\|_{2,T} \nonumber\\
   &= h_T^{3/2} \|(1-\Pi_E^0)\partial_{\nn}\eta\|_E \|z\|_{2,T}
   \lesssim h_T^{1+s} \|\eta\|_{1+s,T} \|z\|_{2,T}.
\end{align}
Here, bound $\|(1-\Pi_E^1)g_E\|_E\lesssim h_T\|(1-\Pi_E^0)\partial_{\bt} g_E\|_E$
follows from a Bramble--Hilbert argument and
$\|(1-\Pi_E^0)\partial_{\nn}\eta\|_E\lesssim h_T^{s-1/2}\|\eta\|_{1+s,T}$ is due to a trace
argument, cf.~\cite{Grisvard_85_EPN}, and scaling properties, cf.~\cite{Heuer_14_OEF}.
Combining relation \eqref{term2} with bounds \eqref{term2b}, \eqref{term2c}, and summing
over all elements $T\in\cT$, we conclude that
\begin{align} \label{est_11}
   |\dual{\tq_1-\tq_{h,1}}{z}_{\cS}|
   &\lesssim
   \big( h \|\eta\|_1 + h^{1+s} \|\eta\|_{1+s} \big) \|z\|_{2,\cT}
   \lesssim
   \bigl(h \|\Div \MM\| + h^{1+s} \|\div\Div\MM\| \bigr) \|z\|_{2,\cT}
\end{align}
holds for any $z\in H^2(\cT)$.
In the first estimate we applied the inequality $\|\cdot\|_{1+s,\cT}\lesssim \|\cdot\|_{1+s}$
which is immediate for the Sobolev--Slobodeckij norm, see
relations \cite[(3.6), (3.7)]{BespalovH_08_hpB} which also apply to orders $1+s\in(1,2)$.
We refer to \cite{Faehrmann_00_LAS} for related estimates.
We note that the particular choice of a norm in the spaces
$H^{1+s}(T)$ ($T\in\cT$) is not relevant for shape-regular elements, see \cite{Heuer_14_OEF}.
For the second inequality in \eqref{est_11} we used the regularity results from Step~1.

\textbf{Step 3 (construction of $\tq_{h,2}$).}
As mentioned before, in this case we cannot proceed as in \eqref{tq1}. In fact, in general
$\nn\cdot\Div\MM_2|_{\partial T}\in H^{-1/2}(\partial T)$ which is not localizable on edges to be tested
with traces of $H^2(T)$-functions.
However, since $\div\Div\MM_2=0$, there exists $\widetilde\eta\in H^1(\Omega)$
with $\Div\MM_2 = \curl \widetilde\eta$,
$\|\widetilde\eta\|_1 \lesssim \|\Div\MM_2\|$ and
$\nn\cdot\Div\MM_2 = \nn\cdot\curl\widetilde\eta$ on $\partial T$ for any $T\in\cT$.
Then, proceeding as in \eqref{tq1} and integrating by parts, we obtain
\begin{align} \label{tq2}
   \dual{\tq_2}{z}_{\partial T}
   &= \dual{\nn\cdot\Div\MM_2}{z}_{\partial T}
   - \dual{\MM_2\nn}{\grad z}_{\partial T}
   \nonumber\\
   &=\dual{\partial_{\bt}\widetilde\eta}{z}_{\partial T}
   - \dual{\bt\cdot\MM_2\nn}{\partial_{\bt}z}_{\partial T}
   - \dual{\nn\cdot\MM_2\nn}{\partial_{\nn} z}_{\partial T} 
   \nonumber\\
   &= -\sum_{E\in\cE_T}\bigl(
      \dual{\widetilde\eta+\bt\cdot\scurl\eeta\nn}{\partial_{\bt} z}_E
      + \dual{\nn\cdot\scurl\eeta\nn}{\partial_{\nn} z}_E\bigr)
   \quad\forall z\in H^2(T).
\end{align}
Note that the latter localization is possible since we test with $z\in H^2(T)$ so that
$\partial_{\bt} z|_{\partial T}, \partial_{\nn} z|_{\partial T}\in L^2(\partial T)$.
Representation \eqref{tq2} leads us to choose the following degrees of freedom
\eqref{dof_dd} for $\tq_{h,2}$,
\begin{align*}
   \Pi_E^0 (\nn\cdot\scurl \eeta \nn)|_E &\quad (E\in\cE), \\
   \partial_{\bt} \Pi_E^1 \widetilde\eta|_E
   + \partial_{\bt} \Pi_E^1(\bt\cdot\scurl\eeta \nn)|_E &\quad (E\in\cE), \\
   \jump{\scurl\eeta}_{\partial T}(\bx)
   - \jump{\Pi_{\cE_T}^1\widetilde\eta}_{\partial T}(\bx)
   - \jump{\Pi_{\cE_T}^1\scurl\eeta}_{\partial T}(\bx) &\quad (\bx\in\cN).
\end{align*}
Note that $\partial_{\bt}p_1|_E\in P^0(E)$ if $p_1|_E \in P^1(E)$.
Similarly as in Step~2 we abbreviate as $\jump{\Pi_{\cE_T}^1\widetilde\eta}_{\partial T}(\bx)$
and $\jump{\Pi_{\cE_T}^1\scurl\eeta}_{\partial T}(\bx)$
the corresponding jumps across two edges of an element $T$ with the sign convention as described before,
and using the same sign convention as for $\jump{\scurl\eeta}_{\partial T}(\bx)$.
Note that the latter choices satisfy~\eqref{dof_dd_constraint}.
  
Using \eqref{tq2}, and integrating edge-wise by parts, we obtain for any $z\in H^2(T)$ and $T\in\cT$
\begin{align*}
   &\dual{\tq_2-\tq_{h,2}}{z}_{\partial T}
   = -\dual{(1-\Pi_{\cE_T}^1)(\widetilde\eta+\bt\cdot\scurl\eeta\nn)}{\partial_{\bt} z}_{\partial T}
   - \dual{(1-\Pi_{\cE_T}^0)\nn\cdot\scurl\eeta\nn}{\partial_{\nn} z}_{\partial T}.
\end{align*}
Now, the projection property of $\Pi_{\cE_T}^p$ and the trace approximation inequality
$\|(1-\Pi_E^p)w\|_E \lesssim h^{1/2}\|w\|_{1,T}$ for any $w\in H^1(T)$ show that
\begin{align*}
   &\dual{(1-\Pi_{\cE_T}^1)(\widetilde\eta+\bt\cdot\scurl\eeta\nn)}{\partial_{\bt} z}_{\partial T}
   + \dual{(1-\Pi_{\cE_T}^0)\nn\cdot\scurl\eeta\nn}{\partial_{\nn} z}_{\partial T}
   \\
   &\
   = \dual{(1-\Pi_{\cE_T}^1)(\widetilde\eta+\bt\cdot\scurl\eeta\nn)}
          {(1-\Pi_{\cE_T}^1)\partial_{\bt} z}_{\partial T} 
   + \dual{(1-\Pi_{\cE_T}^0)\nn\cdot\scurl\eeta\nn}{(1-\Pi_{\cE_T}^0)\partial_{\nn} z}_{\partial T}
   \\
   &\
   \lesssim h \bigl(\|\widetilde\eta\|_{1,T} + \|\scurl\eeta\|_{1,T}\bigr)\|z\|_{2,T}
   \quad\forall z\in H^2(T),\ T\in\cT.
\end{align*}
Summing over all elements $T\in\cT$ and using the regularity estimates from Step~1,
we conclude that
\begin{align*}
  |\dual{\tq_2-\tq_{h,2}}z_{\cS}| \lesssim h\|\MM\|_1\|z\|_{2,\cT}\quad\forall z\in H^2(\cT).
\end{align*}
Combining this bound with estimate \eqref{est_11} and  the splitting from Step~1,
we obtain the stated error bound of Proposition~\ref{prop_tP_approx}.
\end{proof}

Before analyzing DPG scheme \eqref{DPG_Stokes} with regularized functional $L_h$ we need to
derive an error estimate for the scheme without regularized functional and datum
$f\in L_2(\Omega)$, replacing $\rot\ff$.
This scheme reads
\begin{equation} \label{DPG_Stokes_L2}
   \uu^\perp_h:=(u_h,\PP^\perp_h,\tu^\perp_h,\tP^\perp_h)\in\UU^\perp_h:\quad
   b^\perp(\uu^\perp_h,\dvv^\perp) = \vdual{f}{\deltav}
   \quad\forall \dvv^\perp=(\deltav,\deltaQQ^\perp)\in\ttt_h^\perp(\UU_h^\perp).
\end{equation}
It provides an approximation to the following variational problem:
\begin{equation} \label{VF_Stokes_L2}
   \uu^\perp\in\UU^\perp:\quad
   b^\perp(\uu^\perp,\dvv^\perp) = \vdual{f}{\deltav}
   \quad\forall \dvv^\perp=(\deltav,\deltaQQ^\perp)\in\VV^\perp.
\end{equation}

\begin{prop} \label{prop_DPG_Stokes_L2}
Let $f\in L_2(\Omega)$ be given. Problem \eqref{VF_Stokes_L2} has a unique solution
$\uu=(u,\PP^\perp,\tu^\perp,\tP^\perp)\in\UU^\perp$. Component $u$ solves
\eqref{Stokes} for a datum $\ff$ that satisfies $\rot\ff=f$.
Furthermore, \eqref{DPG_Stokes_L2} has a unique solution $\uu_h^\perp\in\UU_h^\perp$
with corresponding function $\uu_h\in\UU_h$. Assuming that $u\in H^3(\Omega)$,
and $\Omega$ being a regular polygonal domain, $\uu_h$ satisfies
\begin{align*}
    \|\uu-\uu_h\|_U \lesssim \|h_\cT\|_\infty \|u\|_3 + \|h_\cT\|_\infty^{1+s} \|f\|
\end{align*}
with a hidden constant that is independent of $\cT$. Here, $s>0$ is the regularity shift \eqref{shift}.
\end{prop}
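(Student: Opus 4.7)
The plan is to proceed in the standard DPG fashion: establish well-posedness of both the continuous and discrete problems, invoke quasi-optimality, and then bound each component of the best approximation error separately. The new technical input is the trace bound from Proposition~\ref{prop_tP_approx}, which replaces the cruder estimate used in \cite{FuehrerHN_19_UFK}.

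For well-posedness, I would first observe that \eqref{VF_Stokes_L2} is essentially the pull-back of the variational formulation from Theorem~\ref{thm} through the isometry $(\cdot)^\perp$ given in Lemma~\ref{la_Stokes_norms}. Picking any $\ff\in\bL_2(\Omega)$ with $\rot\ff=f$ (e.g., $\ff=\curl\phi$ where $-\Delta\phi=f$ with homogeneous boundary data), Theorem~\ref{thm} delivers the unique solution, and component $u$ solves \eqref{Stokes}. For discrete well-posedness of \eqref{DPG_Stokes_L2} I would invoke the Fortin operator construction from \cite[\S 2.5]{FuehrerH_19_FDD}, which transfers through the $(\cdot)^\perp$ identification and yields a uniformly bounded Fortin operator $\Pi_F:\VV^\perp\to\VV_h^\perp$. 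By the Gopalakrishnan--Qiu framework this gives the discrete inf--sup stability and hence the quasi-optimal bound
\[
   \|\uu-\uu_h\|_{\UU^\perp}
   \lesssim
   \inf_{\ww_h\in\UU_h^\perp} \|\uu-\ww_h\|_{\UU^\perp}.
\]

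The remaining work is to bound this infimum componentwise, taking advantage of $u\in H^3(\Omega)$ and $\PP=-\Grad\curl u\in\HH^1(\Omega)$ (whence $\PP^\perp\in\HH^1(\Omega)$), together with the identity $\div\Div\PP^\perp=\rot\Div\PP=-\rot\Div\Grad\curl u=\Delta^2 u=f$ obtained via Lemma~\ref{la_Stokes_tech} and \eqref{rep}. For the field variables I use the $L_2$-projection $\Pi^0$ onto piecewise constants (or $P^1(\cT)$): standard Bramble--Hilbert bounds give $\|u-\Pi^0 u\|\lesssim h\|u\|_1$ (a subdominant $O(h^2)$ is in fact available) and $\|\PP^\perp-\Pi^0\PP^\perp\|\lesssim h\|\PP^\perp\|_1\lesssim h\|u\|_3$. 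For the trace $\tP^\perp=\traceDD{}(\PP^\perp)$ I would apply Proposition~\ref{prop_tP_approx} with $\MM=\PP^\perp$, which directly yields
\[
   \min_{\tq_h\in\widehat Q_\cS}\|\tP^\perp-\tq_h\|_{\inf,\div\Div}
   \lesssim h\|\PP^\perp\|_1 + h^{1+s}\|\div\Div\PP^\perp\|
   \lesssim h\|u\|_3 + h^{1+s}\|f\|.
\]

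The main obstacle is the bound on the remaining trace $\tu^\perp=\traceGG{}(u)$, i.e.\ constructing a suitable $\tilde u_h\in\widehat U_\cS$ with
\(
   \|\tu^\perp-\tilde u_h\|_{\inf,2}\lesssim h\|u\|_3.
\)
In \cite{FuehrerHN_19_UFK} this was done under the stronger assumption $u\in H^4(\Omega)$. My strategy here would mirror the approach of Proposition~\ref{prop_tP_approx}: use the minimal-extension characterization of $\|\cdot\|_{\inf,2}$ (the $H^2(\cT)'$ dual of $\|\cdot\|_{2,\cT}$), localize $\dual{\traceGG{}(u)}\cdot_{\cS}$ edge-wise using the $H^3$-regularity of $u$ (which yields $u|_E$ and $\partial_{\nn} u|_E$ in $H^{5/2}(E)$, $H^{3/2}(E)$, respectively), and choose $\tilde u_h\in\widehat U_\cS$ via the natural $L_2(E)$-projection on edges combined with a consistent choice of nodal values. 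Bramble--Hilbert on edges and shape-regular scaling arguments (as in the bounds \eqref{term2b}, \eqref{term2c}) then deliver the $O(h)\|u\|_3$ rate; the fact that $\widehat U_\cS$ contains cubic traces with affine normal derivatives provides more than enough polynomial richness. Combining all four contributions through the triangle inequality finishes the proof.
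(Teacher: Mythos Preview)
Your overall strategy --- well-posedness via the $(\cdot)^\perp$ isometry and Theorem~\ref{thm}, Fortin operator from \cite{FuehrerH_19_FDD}, quasi-optimality, then componentwise best-approximation --- is exactly what the paper does. The bound on $\tP^\perp$ via Proposition~\ref{prop_tP_approx} is also the same.

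However, you have misidentified the ``main obstacle''. The $H^4$ requirement in \cite{FuehrerHN_19_UFK} came from the $\traceDD{}$-approximation (their Lemma~6.4 needs $\MM\in\HH^2(\Omega)$, i.e., $u\in H^4$), \emph{not} from the $\traceGG{}$-approximation. The bound
\[
   \min_{(w_h,\SS_h,\tbw_h,0)\in\UU_h^\perp}
   \bigl(\|u-w_h\|+\|\PP^\perp-\SS_h\|+\|\tu^\perp-\tbw_h\|_{\inf,2}\bigr)\lesssim h\|u\|_3
\]
is already established in \cite[Section~6]{FuehrerHN_19_UFK} under $u\in H^3(\Omega)$, and the paper's proof simply cites it. So your proposed re-derivation of the $\tu^\perp$-bound is unnecessary; the genuinely new ingredient is precisely Proposition~\ref{prop_tP_approx}, which relaxes the $\traceDD{}$-approximation from $\MM\in\HH^2$ to $\MM\in\HH^1$. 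Once you correct this, your argument and the paper's coincide.
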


\begin{proof}
We start by noting that discrete system \eqref{DPG_Stokes_L2} inherits the well-posedness and
quasi-optimal convergence from the corresponding scheme with optimal test functions
$\dvv^\perp\in\ttt^\perp(\UU_h^\perp)$ once the existence of a Fortin operator
$\Pi_F:\;\VV^\perp\to\VV_h^\perp$ is guaranteed, see~\cite{GopalakrishnanQ_14_APD}.
Such an operator with the required properties has been constructed in \cite{FuehrerH_19_FDD},
see \S{2.5} there.
Therefore, using relation \eqref{b_Stokes}, Theorem~\ref{thm} shows that
\eqref{VF_Stokes_L2} and \eqref{DPG_Stokes_L2} have unique solutions $\uu^\perp\in\UU^\perp$
and $\uu_h^\perp\in\UU_h^\perp$, respectively.
These solutions correspond to unique elements $\uu\in\UU$ and $\uu_h\in\UU_h$.
Solution $\uu=(u,\PP,\tu,\tP)\in\UU$ satisfies $\PP=-\Grad\curl u$, $\tu=\tr{}(u)$, $\tP=\tra{}(\PP)$,
and $\rot\Div\PP=f$, and discretization
$\uu_h\in\UU_h$ is a quasi best-approximation of $\uu$ in the $\UU$-norm.
In~\cite[Section~6]{FuehrerHN_19_UFK} it is shown that
\begin{align*}
    \min_{(w_h,\SS_h,\tbw_h,0)\in \UU_h^\perp}
    \big(\|u-w_h\| + \|\PP^\perp-\SS_h\| + \|\tu^\perp - \tbw_h\|_{\inf,2}\big)
    \lesssim h \|u\|_3.
\end{align*}
We also have the approximation bound
\begin{align*}
   \min_{\tq_h\in \widehat Q_\cS} \|\tP^\perp - \tq_h\|_{\inf,\div\Div}
   \lesssim h \|\PP^\perp\|_1 + h^{1+s} \|\div\Div\PP^\perp\|
\end{align*} 
by Proposition~\ref{prop_tP_approx}.
The proof of Proposition~\ref{prop_DPG_Stokes_L2} is finished by bounding
$\|\PP^\perp\|_1=\|\Grad\curl u\|_1\lesssim \|u\|_3$, recalling that
$\div\Div\PP^\perp=\rot\Div\PP=f$, cf.~Lemma~\ref{la_Stokes_tech}, and
making use of norm identity $\|\uu\|_\UU=\|\uu^\perp\|_{\UU^\perp}$ for any $\uu\in\UU$
by Lemma~\ref{la_Stokes_norms}.
\end{proof}

{\bf Proof of Theorem~\ref{thm_DPG_Stokes}.}
We follow the proof of \cite[Theorem~11]{FuehrerHK_22_MSO}.
Let $\tilde u$ denote the solution of Stokes problem \eqref{Stokes} with
$\rot\ff$ replaced with $P_h'\rot\ff$,
and set $\tilde\uu:=(\tilde u,\widetilde\PP,\tr{}(\tilde u),\tra{}(\widetilde\PP))$
with $\widetilde\PP:=-\Grad\curl\tilde u$.
The corresponding function
$\tilde\uu^\perp=(\tilde u,\widetilde\PP^\perp,\traceGG{}(\tilde u),\traceDD{}(\widetilde\PP^\perp))$
solves \eqref{VF_Stokes_L2}, see~Proposition~\ref{prop_DPG_Stokes_L2} and~Lemma~\ref{la_Stokes_tech}.
A standard variational formulation of \eqref{Stokes}, the bi-Laplacian, shows that
\(
   \|u-\tilde u\|_2 \simeq \|\rot\ff-P_h'\rot\ff\|_{-2}
\)
with $\|\cdot\|_{-2}$ denoting the canonical norm in the dual space of $H^2_0(\Omega)$.
We then use \cite[Lemma~10]{FuehrerHK_22_MSO} and
the boundedness of $\rot:\;\bL_2(\Omega)\to H^{-1}(\Omega)$ to bound
\[
  \|u-\tilde u\|_2 \lesssim  h \min\{\|\rot\ff-v_h\|_{-1};\; v_h\in P^0(\cT)\}
   \le h \|\rot\ff\|_{-1} \lesssim h \|\ff\|.
\]
This bound implies
\[
   \|u-\tilde u\| + \|\PP-\widetilde\PP\| + \|\tu-\tr{}(\tilde u)\|_{\inf}
   \lesssim  h \|\ff\|.
\]
By Proposition~\ref{prop_DPG_Stokes_L2}, assumption \eqref{reg_Stokes} and an inverse estimate,
we bound
\begin{align*}
    \|\tilde \uu-\uu_h\|_U \lesssim h \|\tilde u\|_3 + h^{1+s} \|P_h'\rot\ff\|
                           \lesssim h \|P_h'\rot\ff\|_{-1} + h^s \|P_h'\rot\ff\|_{-1}
                           \lesssim h^{s} \|P_h'\rot\ff\|_{-1}.
\end{align*}
Using the boundedness of $P_h':\; H^{-1}(\Omega)\to H^{-1}(\Omega)$ by \cite[Lemma~7]{Fuehrer_21_MDN}
and $\rot:\;\bL_2(\Omega)\to H^{-1}(\Omega)$, we conclude that
\(
    \|\tilde \uu-\uu_h\|_U \lesssim h^s \|\ff\|. 
\)
An application of the triangle inequality finishes the proof of Theorem~\ref{thm_DPG_Stokes}. \qed

\subsection{Duality estimates and proof of Theorem~\ref{thm_DPG_Stokes}(ii)} \label{sec_Stokes_pf2}

To show part (ii) of Theorem~\ref{thm_DPG_Stokes} we follow the techniques developed
in~\cite{Fuehrer_19_SDM}.

\begin{prop}\label{prop_dual}
  For a given $g\in L_2(\Omega)$ let $(v,\bQ)\in H_0^2(\Omega)\times \HdDiv{\Omega}$ denote the unique solution of
  \begin{align}\label{eq_dual1}
      -\dDiv \bQ = g,\quad \Grad\grad v+\bQ = 0,\quad
      v|_\Gamma &= 0 = \partial_{\bn}v|_\Gamma.
  \end{align}
  Furthermore, let $(w^*,\bR^*)\in H_0^2(\Omega)\times\HdDiv{\Omega}$ denote the unique solution of
  \begin{align}\label{eq_dual2}
      \dDiv\bR^* = v+g,\quad \bR^*-\Grad\grad w^* &= \bQ,\quad
      w^*|_{\Gamma} = 0 = \partial_{\bn} w^*|_{\Gamma},
  \end{align}
  and set $\ww = (g,0,0,\traceDD{}\bQ) + \ww^* = (g,0,0,\traceDD{}\bQ) + (w^*,\bR^*,\traceGG{\cT}w^*,\traceDD{\cT}\bR^*)\in \UU^\perp$.

  The two functions $\vv=(v,\bQ)$ and $\ww$ are related by $\ttt^\perp\ww=\vv$, i.e., 
  \begin{align}\label{eq_ttt_representation}
    \ip{\vv}{\delta\vv}_{\VV^\perp} = b^\perp(\ww,\delta\vv) \quad\forall \delta\vv \in\VV^\perp.
  \end{align}
\end{prop}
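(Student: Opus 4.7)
The plan is to verify \eqref{eq_ttt_representation} by direct expansion of both sides. First I would write out the left-hand side explicitly,
\[
   \ip{\vv}{\delta\vv}_{\VV^\perp}
   = \vdual{v}{\delta v} + \vdual{\Grad\grad v}{\Grad\grad\delta v}_\cT
     + \vdual{\bQ}{\delta\QQ^\perp} + \vdual{\div\Div\bQ}{\div\Div\delta\QQ^\perp}_\cT,
\]
and on the right-hand side plug the component form $\ww=(g+w^*,\bR^*,\traceGG{}w^*,\traceDD{}\bQ+\traceDD{}\bR^*)$ into $b^\perp$. Every trace pairing is then rewritten using the defining integration-by-parts identities of $\traceGG{}$ and $\traceDD{}$, which convert each trace back into the difference of a piecewise and a global $L_2$ duality. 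Since $\bR^*,\bQ,\Grad\grad w^*\in\LL_2(\Omega)$, these two dualities coincide for the purpose of combining terms, so no subtle distributional argument is needed.

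After the rewrite, two cancellations are immediate: the field term $-\vdual{w^*}{\div\Div\delta\QQ^\perp}_\cT$ precisely cancels the piecewise piece of $\dual{\traceGG{}(w^*)}{\delta\QQ^\perp}_\cS$, and the two symmetric occurrences $\pm\vdual{\bR^*}{\Grad\grad\delta v}_\cT$ also cancel. The four surviving pieces can then be matched term-by-term with the four terms of the $\VV^\perp$-inner product by invoking the PDEs. The coupling contribution reduces to $\vdual{\bR^*-\Grad\grad w^*}{\delta\QQ^\perp}=\vdual{\bQ}{\delta\QQ^\perp}$ by the second equation of \eqref{eq_dual2}; the two load-testing terms merge via $\div\Div\bR^*+\div\Div\bQ=(v+g)+(-g)=v$ into $\vdual{v}{\delta v}$; and the substitutions $\div\Div\bQ=-g$ and $\bQ=-\Grad\grad v$ from \eqref{eq_dual1} convert the last two remainders into $\vdual{\div\Div\bQ}{\div\Div\delta\QQ^\perp}_\cT$ and $\vdual{\Grad\grad v}{\Grad\grad\delta v}_\cT$, respectively.

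Before starting the calculation I would briefly check that $\ww\in\UU^\perp$, which follows from $w^*\in H^2_0(\Omega)$, from $\bR^*\in\HdDiv{\Omega}$ being symmetric by definition, and from $\bQ+\bR^*\in\HdDiv{\Omega}$; and I would verify that the auxiliary problems \eqref{eq_dual1}, \eqref{eq_dual2} are uniquely solvable, which reduces to the clamped biharmonic problem and is standard. I do not expect any conceptual obstacle: the splitting $\ww=(g,0,0,\traceDD{}\bQ)+\ww^*$ is engineered precisely so that all $w^*$- and $\bR^*$-Hessian contributions telescope, reducing the verification to the routine algebraic substitutions above. The only delicate point is the bookkeeping of signs and of which $L_2$-duality is global versus piecewise in each intermediate line.
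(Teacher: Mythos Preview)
Your proposal is correct and follows essentially the same approach as the paper: a direct verification of \eqref{eq_ttt_representation} by expanding both sides, unwinding the trace pairings via their defining integration-by-parts identities, and then invoking the PDE relations \eqref{eq_dual1}, \eqref{eq_dual2} to match terms. The only organizational difference is that the paper starts from the inner-product side and processes the split $\ww=(g,0,0,\traceDD{}\bQ)+\ww^*$ in two separate blocks, whereas you start from $b^\perp(\ww,\cdot)$ with all components at once and track cancellations; this is the same computation run in the opposite direction.
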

\begin{proof}
  We stress that~\eqref{eq_dual1} and~\eqref{eq_dual2} have unique solutions as can be seen by rewriting each of the two systems as a fourth-order PDE, yielding bi-Poisson problems with homogeneous boundary conditions. 

  To see identity~\eqref{eq_ttt_representation} consider the test function $\delta\vv = (\delta v,\delta\bQ)\in\VV^\perp$. Employing the definition of the trace operators and~\eqref{eq_dual1} we get that
  \begin{align*}
    \vdual{\Grad\grad v}{\Grad\grad\delta v}_{\cT} + \vdual{v}{\delta v} 
    &= \vdual{\dDiv\Grad\grad v}{\delta v} - \dual{\traceDD{}\Grad\grad v}{\delta v}_\cS + \vdual{v}{\delta v}
    \\
    &= \vdual{-\dDiv \bQ}{\delta v} + \dual{\traceDD{}\bQ}{\delta v}_\cS + \vdual{v}{\delta v}
    \\
    &= \vdual{g}{\delta v} + \dual{\traceDD{}\bQ}{\delta v}_\cS + \vdual{v}{\delta v} \\
    &= b^\perp( (0,0,0,\traceDD{}\bQ), \delta\vv) + \vdual{v+g}{\delta v}
  \end{align*}
  as well as
  \begin{align*}
    \vdual{\dDiv\bQ}{\dDiv\delta\bQ}_{\cT} + \vdual{\bQ}{\delta\bQ} &= \vdual{-g}{\dDiv\delta\bQ}_{\cT} + \vdual{\bQ}{\delta\bQ}
    \\
    &= b^\perp( (g,0,0,0),\delta\vv) + \vdual{\bQ}{\delta\bQ}. 
  \end{align*}
  Therefore,
  \begin{align*}
    \ip{\vv}{\delta\vv}_{\VV^\perp} = b^\perp((g,0,0,\traceDD{}\bQ),\delta\vv) + \vdual{v+g}{\delta v} + \vdual{\bQ}{\delta\bQ}
    \quad\forall \delta\vv \in \VV^\perp. 
  \end{align*}
  To finish the proof it remains to show that $b^\perp(\ww^*,\delta\vv) = \vdual{v+g}{\delta v} + \vdual{\bQ}{\delta\bQ}$. To do so we use the trace operators and~\eqref{eq_dual2} to arrive at
  \begin{align*}
    b^\perp(\ww^*,\delta\vv) &= \vdual{-w^*}{\dDiv\delta\bQ} + \vdual{\bR^*}{\delta\bQ+\Grad\grad \delta v}
    + \dual{\traceGG{} w^*}{\delta\bQ}_{\cS} + \dual{\traceDD{}\bR^*}{\delta v}_{\cS}
    \\
    &= \vdual{\bR^*-\Grad\grad w^*}{\delta\bQ} + \vdual{\dDiv\bR^*}{\delta v} = \vdual{\bQ}{\delta\bQ} + \vdual{v+g}{\delta v}.
  \end{align*}
  This concludes the proof. 
\end{proof}

For the next result we use the auxiliary solution $\uu_\mathrm{a}^\perp = (u_\mathrm{a},\bP_\mathrm{a}^\perp,\widehat u_\mathrm{a}^\perp,\widehat\bp_\mathrm{a}^\perp) \in \UU^\perp$ given by
\begin{align}\label{uwf_Stokes_regularized}
  b^\perp(\uu_\mathrm{a}^\perp,\delta\vv) = L_h(\delta\vv) \quad\forall \delta\vv\in \VV^\perp.
\end{align}
Thus, $u_\mathrm{a}\in H_0^2(\Omega)$ is the solution of~\eqref{Stokes} with regularized data, i.e, 
\begin{align}\label{Stokes_regularized}
  -\rot\Div\Grad\curl u_\mathrm{a} = P_h'\rot\ff \text{ in }\Omega, \quad
  u_\mathrm{a} = 0, \, \curl u_\mathrm{a} = 0 \text{ on }\Gamma.
\end{align}

\begin{prop}\label{prop_dual2}
  Under the assumptions and notations of Proposition~\ref{prop_dual} the solution component $u_\mathrm{a}$ from above and the component $u_h$ of the DPG approximation $\uu_h^\perp$ (see~\eqref{DPG_Stokes}) satisfy
  \begin{align*}
    |\vdual{u_\mathrm{a}-u_h}g| \lesssim \|\uu_\mathrm{a}^\perp-\uu_h^\perp\|_{\UU^\perp}\big( \|\ww-\ww_h\|_{\UU^\perp} + \|\vv-\vv_h\|_{\VV^\perp} \big)
    \quad\forall \ww_h\in \UU_h^\perp, \vv_h\in \VV_h^\perp
  \end{align*}
  with a hidden constant independent of $\cT$.
\end{prop}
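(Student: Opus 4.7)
The plan is a duality argument in the spirit of Aubin--Nitsche, adapted to the DPG framework.

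I would first establish the key identity
\begin{align*}
   \vdual{u_\mathrm{a}-u_h}{g} = b^\perp(\uu_\mathrm{a}^\perp-\uu_h^\perp,\vv).
\end{align*}
Expanding $b^\perp$ from~\eqref{b_Stokes} with $\vv=(v,\bQ)$, the relations $-\dDiv\bQ=g$ and $\bQ+\Grad\grad v=0$ from~\eqref{eq_dual1} collapse the two field contributions to $\vdual{u_\mathrm{a}-u_h}{g}$. To see that the trace pairings vanish, note that both $\tu_\mathrm{a}^\perp$ and $\tu_h^\perp$ lie in $\traceGG{}(H^2_0(\Omega))$ (the exact one by construction of $\uu_\mathrm{a}^\perp$, the discrete one because $\widehat U_\cS\subset\traceGG{}(H^2_0(\Omega))$), so their difference equals $\traceGG{}\xi$ for some $\xi\in H^2_0(\Omega)$; integration by parts against the globally conforming $\bQ\in\HdDiv{\Omega}$ reduces the pairing to a boundary term that vanishes. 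An analogous argument handles $\dual{\tP_\mathrm{a}^\perp-\tP_h^\perp}{v}_\cS$ using $\widehat Q_\cS\subset\traceDD{}(\HdDiv{\Omega})$ and $v\in H^2_0(\Omega)$.

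Next I would use Galerkin orthogonality: for any $\ww_h\in\UU_h^\perp$ the function $\ttt_h^\perp\ww_h$ is admissible both in the continuous problem~\eqref{uwf_Stokes_regularized} (since $\ttt_h^\perp\ww_h\in\VV^\perp$) and in the DPG scheme~\eqref{DPG_Stokes} (since $\ttt_h^\perp\ww_h\in\ttt_h^\perp(\UU_h^\perp)$), whence $b^\perp(\uu_\mathrm{a}^\perp-\uu_h^\perp,\ttt_h^\perp\ww_h)=0$. Substituting this into the identity above and invoking uniform boundedness of $b^\perp$ yields
\begin{align*}
   |\vdual{u_\mathrm{a}-u_h}{g}|
   = |b^\perp(\uu_\mathrm{a}^\perp-\uu_h^\perp,\vv-\ttt_h^\perp\ww_h)|
   \le \|\uu_\mathrm{a}^\perp-\uu_h^\perp\|_{\UU^\perp}\,\|\vv-\ttt_h^\perp\ww_h\|_{\VV^\perp}.
\end{align*}

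It remains to estimate the last factor. By Proposition~\ref{prop_dual}, $\vv=\ttt^\perp\ww$, and comparing the defining identities of $\ttt^\perp$ and $\ttt_h^\perp$ shows that $\ttt_h^\perp\ww_h\in\VV_h^\perp$ is the $\ip{\cdot}{\cdot}_{\VV^\perp}$-orthogonal projection of $\ttt^\perp\ww_h$ onto $\VV_h^\perp$. Therefore, for any $\vv_h\in\VV_h^\perp$, a double triangle inequality gives
\begin{align*}
   \|\vv-\ttt_h^\perp\ww_h\|_{\VV^\perp}
   &\le \|\ttt^\perp(\ww-\ww_h)\|_{\VV^\perp} + \|\ttt^\perp\ww_h-\ttt_h^\perp\ww_h\|_{\VV^\perp} \\
   &\le \|\ttt^\perp(\ww-\ww_h)\|_{\VV^\perp} + \|\ttt^\perp\ww_h-\vv_h\|_{\VV^\perp} \\
   &\le 2\,\|\ttt^\perp(\ww-\ww_h)\|_{\VV^\perp} + \|\vv-\vv_h\|_{\VV^\perp}
   \lesssim \|\ww-\ww_h\|_{\UU^\perp}+\|\vv-\vv_h\|_{\VV^\perp},
\end{align*}
where the last step uses uniform boundedness of $\ttt^\perp:\UU^\perp\to\VV^\perp$ inherited from that of $b^\perp$. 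Combining the two displays concludes the proof. The main technical point is the first step: carefully exploiting the conformity built into the discrete trace spaces so that the two trace pairings in $b^\perp(\uu_\mathrm{a}^\perp-\uu_h^\perp,\vv)$ really vanish; after that, the rest is routine DPG duality bookkeeping.
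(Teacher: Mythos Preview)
Your argument is correct. The paper takes a different but equivalent route: it recasts the DPG scheme in its mixed (saddle-point) form via the bilinear form
\[
   a^\perp\bigl((\uu,\vv),(\delta\uu,\delta\vv)\bigr)
   = b^\perp(\uu,\delta\vv) + \ip{\vv}{\delta\vv}_{\VV^\perp} - b^\perp(\delta\uu,\vv),
\]
introducing the discrete error-representation function $\mathfrak{e}_h^\perp\in\VV_h^\perp$. The identity $\vv=\ttt^\perp\ww$ then gives $b^\perp(\uu_\mathrm{a}^\perp-\uu_h^\perp,\vv)=a^\perp\bigl((\uu_\mathrm{a}^\perp-\uu_h^\perp,-\mathfrak{e}_h^\perp),(\ww,\vv)\bigr)$, and Galerkin orthogonality in the mixed form lets one subtract any $(\ww_h,\vv_h)\in\UU_h^\perp\times\VV_h^\perp$ directly; the bound $\|\mathfrak{e}_h^\perp\|_{\VV^\perp}\lesssim\|\uu_\mathrm{a}^\perp-\uu_h^\perp\|_{\UU^\perp}$ closes the estimate. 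Your approach instead stays in the primal formulation, uses Galerkin orthogonality against $\ttt_h^\perp\ww_h$, and exploits that $\ttt_h^\perp\ww_h$ is the $\VV^\perp$-orthogonal projection of $\ttt^\perp\ww_h$ onto $\VV_h^\perp$; this sidesteps the mixed machinery entirely and is arguably more elementary. The paper's route, on the other hand, is the systematic one from~\cite{Fuehrer_19_SDM} and generalizes cleanly to other DPG duality estimates. Both hinge on the same initial identity $\vdual{u_\mathrm{a}-u_h}{g}=b^\perp(\uu_\mathrm{a}^\perp-\uu_h^\perp,\vv)$, which you justify carefully via the conformity of the discrete trace spaces (Proposition~\ref{prop_jumps} in the transformed setting).
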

\begin{proof}
    For the proof we use the mixed formulation of the DPG method, see,
    e.g.,~\cite[Section 3.2]{Fuehrer_19_SDM} and references therein.
   To that end we define the bilinear form $a^\perp(\cdot,\cdot)$ by
  \begin{align*}
    a^\perp( (\uu,\vv),(\delta\uu,\delta\vv)) = b^\perp(\uu,\delta\vv) + \ip{\vv}{\delta\vv}_{\VV^\perp} - b^\perp(\delta\uu,\vv).
  \end{align*}
  The DPG method~\eqref{DPG_Stokes} then reads
  \begin{align*}
    \uu_h^\perp \in \UU_h^\perp: \qquad 
    a^\perp( (\uu_h^\perp,\mathfrak{e}_h^\perp),(\delta\uu_h,\delta\vv_h)) = L_h(\delta\vv_h) 
    \quad\forall (\delta\uu_h,\delta\vv_h)\in \UU_h^\perp\times \VV_h^\perp.
  \end{align*}
  The element $\mathfrak{e}_h^\perp$ is the error representation function. 
  It satisfies for all $\delta\vv_h\in \VV_h^\perp$
  \begin{align*}
    \ip{\mathfrak{e}_h^\perp}{\delta\vv_h}_{\VV^\perp} = L_h(\delta\vv_h) - b^\perp( \uu_h^\perp,\delta\vv_h) = 
    b^\perp(\uu_\mathrm{a}^\perp-\uu_h^\perp,\delta\vv_h) \lesssim \|\uu_\mathrm{a}^\perp-\uu_h^\perp\|_{\UU^\perp}
    \|\delta\vv_h\|_{\VV^\perp}.
  \end{align*}
  Therefore, choosing $\delta\vv_h = \mathfrak{e}_h^\perp$ we see that 
  \begin{align*}
    \|\mathfrak{e}_h^\perp\|_{\VV^\perp} \lesssim \|\uu_\mathrm{a}^\perp-\uu_h^\perp\|_{\UU^\perp}.
  \end{align*}

  By the ultraweak formulation of the regularized problem~\eqref{uwf_Stokes_regularized} we see that
  \begin{align*}
    a^\perp( (\uu_\mathrm{a}^\perp,\mathfrak{e}_\mathrm{a}^\perp),(\delta\uu,\delta\vv)) = L_h(\delta\vv) 
    \quad\forall (\delta\uu,\delta\vv)\in \UU^\perp\times \VV^\perp
  \end{align*}
  with $\mathfrak{e}_\mathrm{a}^\perp:=0$. Particularly, Galerkin orthogonality holds, i.e., 
  \begin{align*}
    a^\perp( (\uu_\mathrm{a}^\perp-\uu_h^\perp,\mathfrak{e}_\mathrm{a}^\perp-\mathfrak{e}_h^\perp),(\delta\uu_h,\delta\vv_h)) = 0 \quad\forall (\delta\uu_h,\delta\vv_h)\in \UU_h^\perp\times \VV_h^\perp.
  \end{align*}
  Recall the definitions of $\vv$ and $\ww$ from Proposition~\ref{prop_dual}. Let $\ww_h\in \UU_h^\perp$, $\vv_h\in \VV_h^\perp$ be arbitrary. 
  Using~\eqref{eq_ttt_representation} and Galerkin orthogonality we have the representation
  \begin{align*}
    |\vdual{u_\mathrm{a}-u_h}g| &= |b^\perp(\uu_\mathrm{a}^\perp-\uu_h^\perp,\vv)| = |a^\perp( (\uu_\mathrm{a}^\perp-\uu_h^\perp,\mathfrak{e}_\mathrm{a}^\perp-\mathfrak{e}_h^\perp),(\ww,\vv))|
    \\ &=|a^\perp( (\uu_\mathrm{a}^\perp-\uu_h^\perp,\mathfrak{e}_\mathrm{a}^\perp-\mathfrak{e}_h^\perp),(\ww-\ww_h,\vv-\vv_h))|
    \\
    &\lesssim \big(\|\uu_\mathrm{a}^\perp-\uu_h^\perp\|_{\UU^\perp} + \|\mathfrak{e}_\mathrm{a}^\perp-\mathfrak{e}_h^\perp\|_{\VV^\perp}\big) \big(\|\ww-\ww_h\|_{\UU^\perp} + \|\vv-\vv_h\|_{\VV^\perp}\big)
    \\
    &\lesssim \|\uu_\mathrm{a}^\perp-\uu_h^\perp\|_{\UU^\perp} \big(\|\ww-\ww_h\|_{\UU^\perp} + \|\vv-\vv_h\|_{\VV^\perp}\big).
  \end{align*}
  In the last estimate we have used that $\mathfrak{e}_\mathrm{a}^\perp=0$ and $\|\mathfrak{e}_h^\perp\|_{\VV^\perp}\lesssim \|\uu_\mathrm{a}^\perp-\uu_h^\perp\|_{\UU^\perp}$ as elaborated above.
\end{proof}

{\bf Proof of Theorem~\ref{thm_DPG_Stokes}(ii).}
Let $\Pi_h^q\colon L_2(\Omega)\to P^1(\cT)$ denote the orthogonal projection onto piecewise polynomials of degree $\leq q$. 
Writing $u-u_h = u-u_\mathrm{a} + u_\mathrm{a} - \Pi_h^1 u_\mathrm{a} + \Pi_h^1 u_\mathrm{a}-u_h$ the triangle inequality yields
\begin{align*}
  \|u-u_h\| &\leq \|u-u_\mathrm{a}\| + \|u_\mathrm{a} - \Pi_h^1 u_\mathrm{a}\| + \| \Pi_h^1 u_\mathrm{a}-u_h\|.
\end{align*}
For the estimation of the first term we simply use continuous dependence on the data and the properties of the operator $P_h'$ (see the proof of Theorem~\ref{thm_DPG_Stokes}) to conclude that
\begin{align*}
  \|u-u_\mathrm{a}\| \lesssim \|(1-P_h')\rot\ff\|_{-2} \lesssim h^2 \|\rot\ff\|. 
\end{align*}
For the second term we additionally use the approximation properties of the projection and obtain
\begin{align*}
  \|(1-\Pi_h^1)u_\mathrm{a}\| \lesssim h^2 \|u_\mathrm{a}\|_2 \lesssim h^2 \|P_h'\rot\ff\|_{-1} \lesssim h^2 \|\ff\|.
\end{align*}
For the third and final term $\| \Pi_h^1 u_\mathrm{a}-u_h\|$ we employ Proposition~\ref{prop_dual} and~\ref{prop_dual2} with $g=\Pi_h^1 u_\mathrm{a}-u_h$ to see that
\begin{align}\label{eq_g}
  \begin{split}
  \| \Pi_h^1 u_\mathrm{a}-u_h\|^2 &= \vdual{u_\mathrm{a}-u_h}{\Pi_h^1 u_\mathrm{a}-u_h} 
  \lesssim \|\uu_\mathrm{a}^\perp-\uu_h^\perp\|_{\UU^\perp} \big(\|\ww-\ww_h\|_{\UU^\perp} + \|\vv-\vv_h\|_{\VV^\perp}\big)
  \\
  &\lesssim h^s \|\ff\| \big(\|\ww-\ww_h\|_{\UU^\perp} + \|\vv-\vv_h\|_{\VV^\perp}\big) \quad\forall \ww_h\in\UU_h^\perp,\vv_h\in\VV_h^\perp,
  \end{split}
\end{align}
where the last estimate follows as in the proof of Theorem~\ref{thm_DPG_Stokes}. 
It remains to bound 
\begin{align*}
  \|\ww-\ww_h\|_{\UU^\perp} + \|\vv-\vv_h\|_{\VV^\perp}.
\end{align*}
Note that~\eqref{eq_dual1} implies that
\(
  \Delta^2 v = g,
\)
and by the regularity assumption~\eqref{reg_biharmonic} we have $\|v\|_3\lesssim \|g\|_{-1}\lesssim \|g\|$.
Let $\Pi_F^{\dDiv}\colon \HdDiv\cT\to \mathbb{P}^{4,s}(\cT)$ denote the Fortin operator defined in~\cite[Lemma~16]{FuehrerH_19_FDD}. It has the following properties:
\begin{align*}
  \|\Pi_F^{\dDiv}\bR\| & \lesssim \|\bR\| + h^2\|\dDiv\bR\|_{\cT}, 
  \quad
  \dDiv\Pi_F^{\dDiv}\bR = \Pi_h^2\dDiv\bR
  \quad\forall \bR\in\HdDiv\cT.
\end{align*}
Taking $\vv_h = (\Pi_h^2 v,\Pi_h^0\bQ+\Pi_F^{\dDiv}(1-\Pi_h^0)\bQ)\in \VV_h^\perp$ and using the approximation properties of $\Pi_h^q$ as well as the above mentioned properties of the Fortin operator we see that
\begin{align*}
  \|\vv-\vv_h\|_{\VV^\perp} &\lesssim h\|v\|_{3} + \|(1-\Pi_F^{\dDiv})(1-\Pi_h^0)\bQ\| + \|\dDiv(1-\Pi_F^{\dDiv})(1-\Pi_h^0)\bQ\|_\cT \\
  &\lesssim h\|v\|_3 + \|(1-\Pi_h^0)\bQ\| +h^2\|\dDiv(1-\Pi_h^0)\bQ\|_\cT + \|(1-\Pi_h^2)\dDiv(1-\Pi_h^0)\bQ\|_\cT 
  \\
  &\lesssim h\|v\|_3 + h\|\bQ\|_1 + h^2\|g\| + \|(1-\Pi_h^2)g\| = h\|v\|_3 + h^2\|g\| + h\|\Grad\grad v\|_1 \lesssim h\|g\|.
\end{align*}
Here, we have used that $g= \Pi_h^1 u_\mathrm{a}-u_h\in P^1(\cT)$ so that $(1-\Pi_h^2)g=0$. 
For the remaining term in~\eqref{eq_g} we recall from Proposition~\ref{prop_dual2} that $\ww = (g,0,0,\traceDD{}\bQ)+\ww^*$ and bound
\begin{align*}
  \min_{\ww_h\in\UU_h^\perp} \|\ww-\ww_h\|_{\UU^\perp} &\leq \min_{\widehat\bq_h\in \widehat Q_\cS} \|(g,0,0,\traceDD{}\bQ)-(g,0,0,\widehat\bq_h)\|_{\UU^\perp}
  + \min_{\ww_h^*\in\UU_h^\perp} \|\ww^*-\ww_h^*\|_{\UU^\perp}
\end{align*}
which is possible since $(g,0,0,0)\in \UU_h^\perp$. An application of Proposition~\ref{prop_tP_approx} leads to
\begin{align*}
  \min_{\widehat\bq_h\in \widehat Q_\cS} \|(g,0,0,\traceDD{}\bQ)-(g,0,0,\widehat\bq_h)\|_{\UU^\perp} 
  \lesssim h \|\bQ\|_1 + h^{1+s} \|\dDiv\bQ\| \lesssim h \| g\|.
\end{align*}
By the regularity assumption~\eqref{reg_biharmonic}, using system~\eqref{eq_dual2},
and arguing as in Proposition~\ref{prop_DPG_Stokes_L2}, we obtain
\begin{align*}
\min_{\ww_h^*\in\UU_h^\perp} \|\ww^*-\ww_h^*\|_{\UU^\perp} \lesssim h(\|w^*\|_3+\|\bR^*\|_1) + h^{1+s}\|\dDiv\bR^*\|
\lesssim h \|g\|.
\end{align*}
Combining the latter estimates we therefore can bound~\eqref{eq_g} (recalling that $g=\Pi_h^1u_\mathrm{a}-u_h$) further by
\begin{align*}
  \| \Pi_h^1 u_\mathrm{a}-u_h\|^2 \lesssim h^s\|\ff\|\,h\|\Pi_h^1u_\mathrm{a}-u_h\|.
\end{align*}
Overall, we finish the proof of Theorem~\ref{thm_DPG_Stokes}(ii) by concluding that 
\begin{align*}
  \|u-u_h\| \lesssim h^{1+s}(\|\ff\|+\|\rot\ff\|).
\end{align*}

\section{Numerical examples} \label{sec_num}
This section presents some numerical examples for the DPG method for the two-dimensional Stokes problem defined in Section~\ref{sec_Stokes}.
For the built-in error estimator of the DPG method we use the notation
\begin{align*}
  \eta = \sup_{0\neq\vv_h\in\VV_h^\perp} \frac{L_h(\vv_h)-b^\perp(\uu_h^\perp,\vv_h)}{\|\vv_h\|_{\VV^\perp}}.
\end{align*}
We consider three simple benchmark problems.

\subsection{Smooth solution}\label{sec_smoothSolution}
We consider the exact solution $u(x,y) = \sin^2(\pi x)\sin^2(\pi y)$, $(x,y)\in \Omega:=(0,1)^2$
and define the force
\(
  \ff = -\Delta \curl u.
\)
Then, the pair $\bu=\curl u\in\bH_0^1(\Omega)$, $p=0$ satisfies the Stokes equations.
Due to the convexity of $\Omega$, regularity property~\eqref{reg_biharmonic} holds true. 
Theorem~\ref{thm_DPG_Stokes} and Corollary~\ref{cor_DPG_Stokes} predict
\begin{align*}
  \|u-u_h\| = \OO(h^2), \quad \|\bu-\bu_h\|=\OO(h), \quad \|\bP^\perp-\bP_h^\perp\| = \OO(h)
\end{align*}
where $h\simeq \dim(\UU_h^\perp)^{-1/2}$.
Figure~\ref{fig_smoothSolution} supports these theoretical results. 

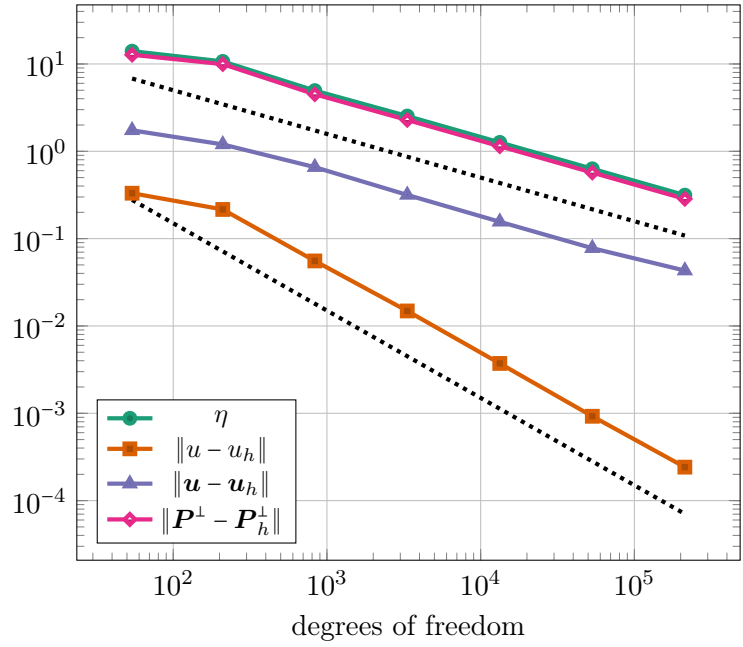
\begin{figure}
  \begin{center}
    \begin{tikzpicture}
      \begin{loglogaxis}[
          width=0.65\textwidth,
          cycle list/Dark2-6,
          cycle multiindex* list={
            mark list*\nextlist
          Dark2-6\nextlist},
          every axis plot/.append style={ultra thick},
          xlabel={degrees of freedom},
          grid=major,
          legend entries={\small $\eta$,\small $\|u-u_h\|$,\small $\|\bu-\bu_h\|$,\small $\|\bP^\perp-\bP_h^\perp\|$},
          legend pos=south west,
        ]
        \addplot table [x=dofDPG,y=estDPG] {data/SmoothSolution.dat};
        \addplot table [x=dofDPG,y=errW] {data/SmoothSolution.dat};
        \addplot table [x=dofDPG,y=errU] {data/SmoothSolution.dat};
        \addplot table [x=dofDPG,y=errM] {data/SmoothSolution.dat};
        \addplot [black,dotted,mark=none] table [x=dofDPG,y expr={50*sqrt(\thisrowno{1})^(-1)}] {data/SmoothSolution.dat};
        \addplot [black,dotted,mark=none] table [x=dofDPG,y expr={15*sqrt(\thisrowno{1})^(-2)}] {data/SmoothSolution.dat};
      \end{loglogaxis}
    \end{tikzpicture}
  \end{center}
  \caption{Error indicator and errors of the field variables for the smooth solution from Section~\ref{sec_smoothSolution}. 
  The black doted lines indicate $\OO(N^{-1/2})=\OO(h)$ resp. $\OO(N^{-1})=\OO(h^2)$ where $N = \dim(\UU_h^\perp)$.}
  \label{fig_smoothSolution}
\end{figure}

\subsection{Lid-driven cavity flow}\label{sec_liddriven}
In this section we consider the classic benchmark problem of a lid-driven cavity flow. 
The cavity is given by $\Omega = (0,1)^2$, with zero external force $\ff$. 
Here, we use  the regularized boundary conditions from~\cite[Section~D.1, Eq.(D.11)]{John_16_FEM},
\begin{align*}
  u|_\Gamma = 0, \quad \partial_{\bn}u(x,y) = 
  \begin{cases}
    \phi(x) & y=1, \\
    0 & \text{else},
  \end{cases}
\end{align*}
where 
\begin{align*}
  \phi(x) = \begin{cases}
    1-\tfrac14\left(1-\cos(\tfrac{0.1-x}{0.1}\pi)\right)^2 & x\in[0,0.1], \\
    1 & x\in(0.1,0.9), \\
    1-\tfrac14\left(1-\cos(\tfrac{x-0.9}{0.1}\pi)\right)^2& x\in[0.9,1].
  \end{cases}
\end{align*}
One verifies that these boundary conditions for $u$ correspond to the boundary conditions
\begin{align*}
  \bu(x,y)|_\Gamma = \begin{cases}
    (\phi(x),0)^\top & y=1, \\
    0 & \text{else}
  \end{cases}
\end{align*}
for the velocity field of the Stokes problem.

\begin{figure}
  \begin{center}
    \begin{tikzpicture}
      \begin{loglogaxis}[
          width=0.49\textwidth,
          cycle list/Dark2-6,
          cycle multiindex* list={
            mark list*\nextlist
          Dark2-6\nextlist},
          every axis plot/.append style={ultra thick},
          xlabel={degrees of freedom},
          grid=major,
          legend entries={\small $\eta$},
          legend pos=south west,
        ]
        \addplot table [x=dofDPG,y=estDPG] {data/LidDriven.dat};
        \addplot [black,dotted,mark=none] table [x=dofDPG,y expr={50*sqrt(\thisrowno{1})^(-1)}] {data/LidDriven.dat};
      \end{loglogaxis}
    \end{tikzpicture}
    \includegraphics[width=0.49\textwidth]{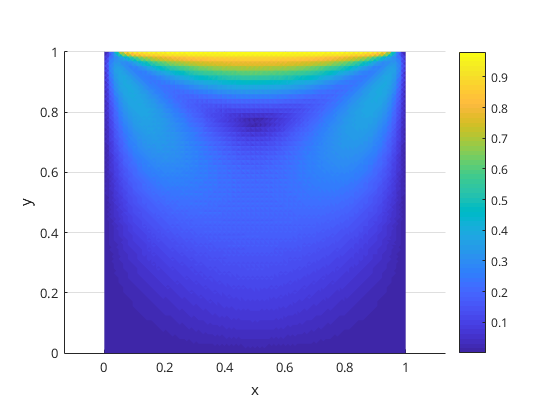}
    \includegraphics[width=0.49\textwidth]{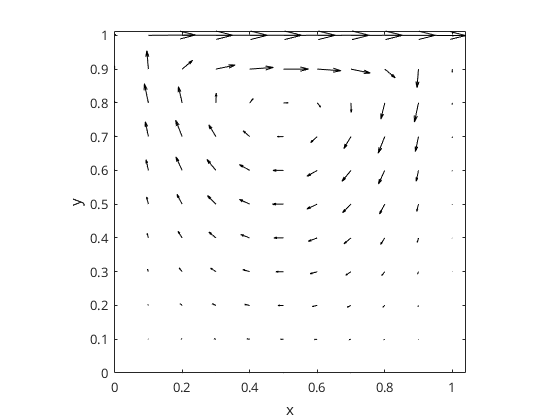}
    \includegraphics[width=0.49\textwidth]{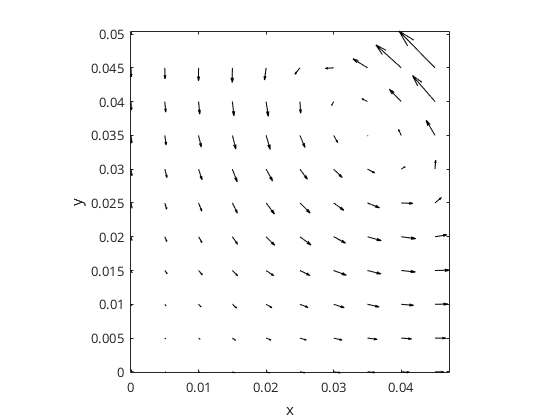}
  \end{center}
  \caption{Upper row: Error indicator $\eta$ for the lid-driven cavity problem (left) and velocity profile $|\bu_h|$ on finest mesh with $\#\cT = 16384$ (right). The black dotted line indicates $\OO(h)$. 
  Lower row: Visualization of the velocity field on the whole domain $\Omega = (0,1)^2$ (left) and magnification on the subdomain $(0,0.05)^2$ (right).}
  \label{fig_liddriven}
\end{figure}

We do not have an explicit representation of the exact solution.
We plot the error estimator $\eta$ in Figure~\ref{fig_liddriven} which indicates that the error converges at the optimal rate after some initial refinements. 
Figure~\ref{fig_liddriven} also shows the magnitude $|\bu_h|$ of the discrete velocity field. 

For the lid-driven cavity problem it is well known that three vortices develop, a big central one and two smaller ones close to the bottom left and right corners (with opposite direction of rotation). 
In the lower row of Figure~\ref{fig_liddriven} we visualize the discrete velocity field by arrows at some equidistributed sample points. The left plot shows the whole domain $\Omega$, whereas the right plot shows (the magnified) discrete velocity field in the subdomain $(0,0.05)^2$, at the the lower left corner.
One observes a vortex with opposite rotation direction compared to the main vortex in the left plot. 

\subsection{Flow in channel with backward step}\label{sec_channel}
In our final example we consider a flow in a channel with a backward facing step,
with domain $\Omega=(0,10)\times (-1,1)\setminus [0,2]\times[-1,0]$ and a parabolic inflow profile.
The setup is similar to~\cite[Section 3.3]{RobertsDM_15_DPG} and references therein.
We consider the following boundary conditions,
\begin{align*}
  \partial_{\bn} u|_\Gamma = 0, \quad
  u(x,y)|_\Gamma = \begin{cases}
    -\frac{(2y+1)(y-1)^2}6 & x=0, \\
    -\frac{(y-1)^2(y+2)}{24} & x=10, \\ 
    0 & y = 1, \\
    -\frac16 & \text{else}.
  \end{cases}
\end{align*}
We stress the fact that the latter condition translates into the following boundary condition for the velocity field,
\begin{align*}
  \bu(x,y)|_\Gamma = \begin{pmatrix}
  u_1(x,y) \\ 0\end{pmatrix}, 
  \quad u_1(x,y) = \begin{cases}
    y(1-y) & x=0, \\
    \frac{(y+1)(1-y)}8 & x=10, \\
    0 &\text{else}.
  \end{cases}
\end{align*}

\begin{figure}
  \begin{center}
    \includegraphics[width=\textwidth]{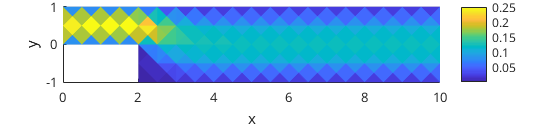}
    \includegraphics[width=\textwidth]{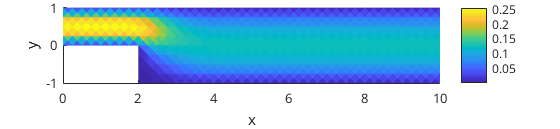}
    \includegraphics[width=\textwidth]{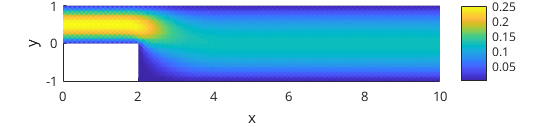}
    \includegraphics[width=\textwidth]{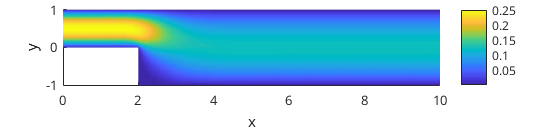}
  \end{center}
  \caption{Velocity magnitude $|\bu_h|$ for the problem of Section~\ref{sec_channel} on a sequence of meshes $\cT_1,\cdots,\cT_4$ with $\#\cT_j = 72\cdot 4^j$ elements.}
  \label{fig_channel}
\end{figure}

Again, we do not know an explicit representation of the solution. 
In Figure~\ref{fig_channel} we visualize $|\bu_h|$ on a sequence of meshes. 
The plots indicate that mass is conserved (not only on fine meshes),
reflecting the fact that mass conservation is intrinsic to formulation~\eqref{Stokes}.


\bibliographystyle{siam}
\bibliography{/home/norbert/tex/bib/bib,/home/norbert/tex/bib/heuer}
\end{document}